\newtheorem{theorem}{Theorem}[section]
\newtheorem{lemma}{Lemma}[section]
\newtheorem{definition}{Definition}[section]
\newtheorem{remark}{Remark}[section]
\newcommand{\ds}{\displaystyle}
\newcommand{\ra}{\rangle}
\newcommand{\la}{\langle}
\newcommand{\lf}{\lambda_{F}}
\newcommand{\lv}{\lambda_{v}}
\newcommand{\lvi}{\lambda_{v_i}}
\newcommand{\lt}{\lambda_{\theta}}
\newcommand{\fe}{F^\varepsilon}
\newcommand{\ve}{v^\varepsilon}
\newcommand{\te}{\theta^\varepsilon}
\newcommand{\bn}{\boldsymbol{\nu}}
\newcommand{\bg}{\boldsymbol{\gamma}}
\newcommand{\bs}{\boldsymbol{\sigma}}
\newcommand{\ph}{\frac{\partial\Phi^B}{\partial F_{i\alpha}}(F)}
\newcommand{\phl}{\frac{\partial\Phi^B}{\partial F_{i\alpha}}(\lambda_F)}
\newcommand{\phb}{\frac{\partial\Phi^B}{\partial F_{i\alpha}}(\bar{F})}
\newcommand{\phv}{\frac{\partial\Phi^B}{\partial F_{i\alpha}}(F)v_i}
\newcommand{\phbv}{\frac{\partial\Phi^B}{\partial F_{i\alpha}}(\bar{F})\bar{v}_i}
\definecolor{listinggray}{gray}{0.9}
\definecolor{lbcolor}{rgb}{0.9,0.9,0.9}
\def\del{\partial}
\begin{document}
	
	\numberwithin{equation}{section}
	
	\title[Measure-valued weak versus strong uniqueness]{Measure-valued solutions for the equations of polyconvex adiabatic thermoelasticity}

	\author[C. Christoforou]{Cleopatra Christoforou}
	\address[Cleopatra Christoforou]{Department of Mathematics and Statistics,
		University of Cyprus, Nicosia 1678, Cyprus.}
	\email{christoforou.cleopatra@ucy.ac.cy} 
	\author[M. Galanopoulou]{Myrto Galanopoulou}
	\address[Myrto Galanopoulou]{Computer, Electrical, Mathematical Sciences \& Engineering Division, King Abdullah University of Science and Technology (KAUST), Thuwal, Saudi Arabia.}
	\email{myrtomaria.galanopoulou@kaust.edu.sa}
	\author[A. E. Tzavaras]{Athanasios E. Tzavaras}
	\address[Athanasios E. Tzavaras]{Computer, Electrical, Mathematical Sciences \& Engineering Division, King Abdullah University of Science and Technology (KAUST), Thuwal, Saudi Arabia. }
	\email{athanasios.tzavaras@kaust.edu.sa}
	
	\date{\today}
	
	\keywords{Polyconvex; Thermoelasticity; Uniqueness; Measure-valued solutions; Relative entropy}
	\subjclass[2010]{35Q74 (primary), and 35A02, 35L65, 74B20, 74D10, 80A17, 74H25 (secondary)} 
	
	\begin{abstract}
		For the system of polyconvex adiabatic thermoelasticity, we define a notion of dissipative measure-valued solution,
		which can be considered as the limit of a viscosity approximation. We embed the system into a symmetrizable hyperbolic one in order to derive the relative
		entropy. However, we base our analysis in the original variables, instead of the 
		symmetric ones (in which the entropy is convex) and we prove measure-valued weak versus strong uniqueness using the 
		averaged relative entropy inequality. 
	\end{abstract}

	\maketitle 

	\section{Introduction}
	For systems of hyperbolic conservation laws, the class of measure-valued solutions \cite{dP85} provides a notion of solvability 
	vast enough to support a global existence theory. These solutions usually arise as limits of converging sequences satisfying an
	approximating parabolic problem \cite{MR3468916}. As these solutions are considered to be very weak, it is crucial to
	examine their stability properties with respect to classical solutions and to attempt that in their natural energy framework. 
	The relative entropy method of Dafermos \cite{doi:10.1080/01495737908962394,MR546634} and DiPerna \cite{MR523630} provides an
	 analytical framework upon which one can examine such questions, and has been tested in a variety of contexts  (e.g. \cite{bds11,MR1831175,GSW2015,christoforou2016relative,fnkt17,bf18}).
	
	In this article, we derive (in the appendix) a framework of dissipative measure valued solutions for the system of adiabatic polyconvex thermoelasticity,
	motivated by approximating that system by the system of thermoviscoelasticity on the natural energy framework.  
	The relative entropy method is then used to show weak-strong uniqueness for polyconvex thermoelasticity in the class of measure-valued solutions. 
	The main novelty of this work is the derivation of the averaged relative entropy inequality with respect to a dissipative
	measure-valued solution. This solution is defined by means of generalized Young measures, describing both oscillatory
	and concentration effects. The analysis is based on the embedding  of polyconvex thermoelasticity into an augmented, symmetrizable, 
	hyperbolic system,  \cite{CGT2017}. However, the embedding cannot be used in a direct manner, 
	and notably,  instead of working with the extended variables, we base our analysis on the parent system in the original 
	variables using the weak stability properties of some transport-stretching identities,
	which allow us to carry out the calculations by placing minimal regularity assumptions in the energy framework.
	
	Consider the system of adiabatic thermoelasticity,
	\begin{align}
	\begin{split}
	\label{sys1adiab.1}
	\partial_t F_{i\alpha}&=\partial_{\alpha}v_i \\ 
	\partial_t v_i&=\partial_{\alpha}\Sigma_{i\alpha}\\
	\partial_t\left(\frac{1}{2}|v|^2+e\right)&=\partial_{\alpha}(\Sigma_{i\alpha}v_i)+r
	\end{split}
	\end{align}
	describing the evolution of a thermomechanical process $\big ( y(x,t) , \theta(x,t) \big) \in \mathbb{R}^3\times\mathbb{R}^+$ with $(x,t)\in\mathbb{R}^3\times\mathbb{R}^+$.
	Here, $F \in \mathbb{M}^{3\times3}$ stands for the deformation gradient, $F = \nabla y$, while $v = \partial_t y$ is 
	the velocity of the motion $y$ and $\theta$ is the temperature. The condition 
	\begin{equation}
	\label{constraint}
	\partial_{\alpha}F_{i\beta}=\partial_{\beta}F_{i\alpha}, \qquad i,\alpha,\beta=1,2,3 \, ,
	\end{equation}
	imposes that $F$ is a gradient and comes from equation \eqref{sys1adiab.1}$_1$ as an involution inherited from the initial data. The stress is denoted as $\Sigma_{i \alpha }$, the internal energy as $e$ and the radiative heat supply as 
	$r$. The requirement of consistency with the Clausius-Duhem 
	inequality imposes that the elastic stresses $\Sigma$, the entropy $\eta$ and the internal energy $e$  are related to
	the free-energy function $\psi$ via the constitutive theory
	\begin{align}
	\label{con.rel.aug}
	\psi=\psi(F,\theta),\quad
	\Sigma=\frac{\partial\psi}{\partial F},\quad
	\eta=-\frac{\partial\psi}{\partial\theta},\quad
	e=\psi+\theta\eta \, .
	\end{align}
	
	Here, we work in the polyconvex regime where the free energy $\psi$ factorizes as a uniformly convex function of the 
	null-Lagrangian vector $\Phi(F)$ and the temperature $\theta,$ namely
	\begin{equation}
	\label{cond.poly}
	\psi (F, \theta) =  \hat{\psi}(\Phi(F),\theta)\,,
	\end{equation}
	satisfying
	\begin{equation}
	\label{cond.redgibbs}
	\hat\psi_{\xi \xi} (\xi ,  \theta) > 0 \, , \quad \hat\psi_{\theta \theta} (\xi, \theta) < 0 \, ,
	\end{equation}
	where $\hat{\psi}=\hat{\psi}(\xi,\theta)$ is a strictly convex function on $\mathbb{R}^{19}\times \mathbb{R}^+$ and
	$\Phi(F)=(F,\mathrm{cof} F,\det F)\in\mathbb{M}^{3\times3}\times\mathbb{M}^{3\times3}\times\mathbb{R}.$
	
	The main result of this article is the weak-strong uniqueness of polyconvex adiabatic thermoelasticity (\ref{sys1adiab.1})-(\ref{cond.poly})
	in the class of dissipative measure-valued solutions. The advantage of the dissipative framework is
	that the averaged energy equation holds in its integrated form.
	Even though this notion of solutions is generally considered to be very weak, not possessing detailed information, 
	this result contributes to a long list of similar works
	\cite{ab97,dm87,MR1831179,MR1831175,christoforou2016relative} on hyperbolic systems of conservation laws,
	pointing out the importance of this framework in the analysis of such physical problems. Unlike the case of scalar
	conservation laws \cite{dP85,MR725524}, where the theory of Young measures suffices to deal with
	nonlinearities and overcome oscillatory behaviors, when it comes to hyperbolic systems, one must take into account
	the formation of both oscillations and concentrations. In our case, the concentration effects are described through
	a concentration measure, which appears in the energy equation since the Fundamental Lemma of Young measures cannot
	represent the weak limits of $\frac{1}{2}|v|^2+e,$ due to lack of $L^1$ precompactness. This is illustrated in Appendix \ref{AppA}.
	Thus we turn our attention to the theory of generalized Young
	measures \cite{ab97,dm87,bds11,MR1831175,christoforou2016relative,GSW2015}
	and apply the relative entropy formulation to compare a dissipative measure-valued solution to polyconvex
	thermoelasticity against a strong solution.
	
	We organize this paper as follows: In Section \ref{sec2}, we define the notion of dissipative measure-valued solutions
	for polyconvex thermoelasticity. This definition comes as a result of the limiting process we discuss in Appendix 
	\ref{AppA}, starting from the associated viscous problem. Section \ref{sec3} is dedicated to the study of the generated 
	Young measure and the concentration measure, which is a well-defined, nonnegative Radon measure for a subsequence of 
	approximate solutions coming from a uniform bound on the energy. In Section \ref{sec4} we calculate the averaged 
	relative entropy inequality (\ref{rel.en.id.mv}) and in Section \ref{sec5} we use it to prove the main theorem on 
	uniqueness of strong solutions in the class of measure-valued solutions. The proof is heavily based on the estimates
	(\ref{bound4}) and (\ref{bound6})-(\ref{bound8}) on the relative entropy, namely Lemmas \ref{lemma1}, \ref{lemma2},
	which are stated and proved at the level of the original variables, instead of the extended ones. As a result, we 
	only assume quite minimal growth hypotheses on the constitutive functions, which guarantee all the necessary technical 
	requirements for the dissipative measure-valued versus strong uniqueness to hold. Additionally, the proof is carried on with 
	respect to a dissipative solution which satisfies an averaged and integrated version of the energy equation, where the 
	concentration measure appears. This setting has the strong advantage that we need no artificial integrability 
	restrictions on the energy equation. Similar results are available for the incompressible Euler equations \cite{bds11}, 
	for  polyconvex elastodynamics \cite{MR1831175}, and for the isothermal gas dynamics system \cite{GSW2015}. 

	\section{Measure-valued solutions for polyconvex adiabatic thermoelasticity} \label{sec2}
	Consider the system of adiabatic thermoelasticity (\ref{sys1adiab.1}),~(\ref{constraint}) together with the entropy 
	production identity
	\begin{equation}
	\label{S2: entropyadiab}
	\partial_t \eta =\frac{r}{\theta} \, ,
	\end{equation}
	under the constitutive theory (\ref{con.rel.aug}) and the polyconvexity hypothesis \eqref{cond.poly}.
	To avoid unnecessary technicalities, henceforth we work in a domain $Q_T=\mathbb{T}^d\times[0,T)$, where $\mathbb{T}^d$ is the torus, $d=3$ and $T\in[0,\infty)$.
	 In the polyconvex case, the Euler-Lagrange equation, 
	\begin{equation}
	\label{null-Lan}
	\partial_{\alpha}\left( \frac{\del \Phi^B}{\del F_{i\alpha}} (\nabla y ) \right)=0 \, , \quad B=1,\dots,19\;,
	\end{equation}
	formulated for the vector of the minors
	$\Phi(F)=(F,\mathrm{cof} F,\det F)\in\mathbb{M}^{3\times3}\times\mathbb{M}^{3\times3}\times\mathbb{R},$
	holds for any motion $y(x,t)$ and together with the kinematic compatibility equation (\ref{sys1adiab.1})$_1$ and
	(\ref{constraint}), allows to express $\partial_t \Phi^B(F)$ as
	\begin{align}
	\label{nul-Lang1}
	\partial_t \Phi^B(F)
	=\ph \partial_{\alpha}v_i=\partial_{\alpha}\left(\phv\right),
	\end{align}
	for any deformation gradient $F$ and velocity field $v.$ Additionally, the stress tensor $\Sigma$ becomes
	\begin{align}
	\label{Sigma}
	\Sigma_{i\alpha}=\frac{\partial\psi}{\partial F_{i\alpha}}(F,\theta)=\frac{\partial }{\partial F_{i\alpha}}\left(\hat{\psi}(\Phi(F),\theta)\right)=\frac{\partial \hat{\psi}}{\partial\xi^B}(\Phi(F),\theta)\ph.
	\end{align}
	The first nine components of $\Phi(F)$ are the components of $F,$ therefore (\ref{con.rel.aug}) implies that 
	we can express the entropy $\eta$ and the internal energy $e$ with respect to the null-Lagrangian vector $\Phi(F),$ namely
	\begin{align}
	\label{eta&e}
	\begin{split}
	\eta(F,\theta)&=-\frac{\partial\psi}{\partial\theta}(F,\theta)=-\frac{\partial \hat{\psi}}{\partial\theta}(\Phi(F),\theta) = :\hat{\eta}( \Phi(F) , \theta)
	,\\
	e(F,\theta)&=\psi(F,\theta)-\theta\frac{\partial\psi}{\partial\theta}(F,\theta)=\hat{\psi}(\Phi(F),\theta)-\theta \frac{\partial\hat{\psi}}{\partial\theta}(\Phi(F),\theta)
	= \hat{e}(\Phi(F),\theta) \, ,
	\end{split}
	\end{align}
	where we have set
	\begin{equation}
	\label{def.hats}
	\hat{\eta}( \xi , \theta) := -\frac{\partial \hat{\psi}}{\partial\theta}( \xi ,\theta) \, , \quad \hat e(\xi ,\theta) := \hat \psi(\xi ,\theta)- \theta\frac{\partial \hat \psi}{\partial\theta}(\xi,\theta) \, .
	\end{equation}
	This allows to supplement the equations of polyconvex thermoelasticity (\ref{sys1adiab.1}),~(\ref{constraint})
	with  \eqref{nul-Lang1} and write
	\begin{align}
	\begin{split}
	\label{sys2}
	\partial_t \Phi^B(F)&=\partial_{\alpha}\left(\phv \right)\\ 
	\partial_t v_i&=\partial_{\alpha}\left(\frac{\partial\hat{\psi}}{\partial \xi^B}(\Phi(F),\theta) \frac{\partial\Phi^B}{\partial F_{i\alpha}}(F)\right)\\
	\partial_t \left(\frac{1}{2}|v|^2+\hat{e}(\Phi(F),\theta)\right)&=\partial_{\alpha}\left(\frac{\partial\hat{\psi}}{\partial \xi^B}(\Phi(F),\theta) \frac{\partial\Phi^B}{\partial F_{i\alpha}}(F) v_i\right)+r\\
	\partial_{\alpha}F_{i\beta}&=\partial_{\beta}F_{i\alpha}
	\end{split}
	\end{align}
	while the entropy production identity (\ref{S2: entropyadiab}) becomes
	\begin{equation}
	\label{entropy.prod}
	\partial_t \hat{\eta}(\Phi(F),\theta) =\frac{r}{\theta}.
	\end{equation}

	This implies that  $(\xi = \Phi(F) , v, \theta )$ satisfies the augmented system 
	\begin{align}
	\begin{split}
	\label{sysaug}
	\partial_t \xi^B &=\partial_{\alpha}\left(\phv \right)\\ 
	\partial_t v_i&=\partial_{\alpha}\left(\frac{\partial\hat{\psi}}{\partial \xi^B}(\xi,\theta) \frac{\partial\Phi^B}{\partial F_{i\alpha}}(F)\right)\\
	\partial_t \left(\frac{1}{2}|v|^2+\hat{e}(\xi,\theta)\right)&=\partial_{\alpha}\left(\frac{\partial\hat{\psi}}{\partial \xi^B}(\xi,\theta) \frac{\partial\Phi^B}{\partial F_{i\alpha}}(F) v_i\right)+r\\
	\partial_{\alpha}F_{i\beta}&=\partial_{\beta}F_{i\alpha}
	\end{split}
	\end{align}
	that consists of conservation laws in $\mathbb{R}^{23}$ subject to the involution \eqref{sysaug}$_4$. The augmented system satisfies the entropy production identity
	\begin{equation}
	\label{entropy.prodaug}
	\partial_t \hat{\eta}(\xi ,\theta) =\frac{r}{\theta} 
	\end{equation}
	and is thus symmetrizable; see \cite{CGT2017} for further details.

	The system \eqref{sysaug} belongs to a 
	general class of hyperbolic conservation laws of the form $$\partial_t A(U)+\partial_{\alpha}f_{\alpha}(U)=0,$$ 
	$U:\mathbb{R}^d\times\mathbb{R}^+\to\mathbb{R}^n,$ studied in~\cite{christoforou2016relative}. Due to \eqref{entropy.prodaug} it
	is symmetrizable, hyperbolic in the extended variables. A general theory including a theorem
	establishing recovery of classical solutions from dissipative measure–valued solutions for hyperbolic systems
	endowed with a convex entropy, was developed in \cite{christoforou2016relative}. We note that since in the variables
	$(F,v,\theta)$ system (\ref{sys2}) is not equipped with a convex entropy, we cannot treat this problem as a direct 
	application of the general setting developed in \cite{christoforou2016relative}. 
	In~\cite{CGT2017}, system~\eqref{sys1adiab.1}--\eqref{cond.redgibbs} was studied by augmenting it to~\eqref{sys2} using the relative entropy method
	in order to prove convergence from thermoviscoelasticity to the system \eqref{sys1adiab.1}--\eqref{cond.redgibbs}.
	The objective in the present paper is to prove a weak-strong uniqueness theorem in the context of measure-valued solutions. 
	This requires to work at the level of the original rather than the augmented system what presents various technical challenges.
		
	Following the theory on generalized Young measures \cite{ab97,christoforou2016relative,dm87}, we define a
	dissipative measure-valued solution to polyconvex thermoelasticity, which involves a parametrized Young measure
	$\bn=\bn_{(x,t)}$ describing the oscillatory behavior of the solution and a Radon measure $\bg\in\mathcal{M}^+(Q_T)$
	describing concentration effects. According to the analysis in Appendix \ref{AppA}, we can treat dissipative
	measure-valued solutions as limits of an approximating solution for the associated viscous problem,
	that satisfy an averaged and integrated energy equation. The reason behind the formation of concentrations, lies with
	the fact that the energy function $(x,t)\mapsto  |v|^2 + e(F,\theta) $ is not weakly precompact 
	in $L^1$ and thus, the Young measure representation fails. Since the only uniform bound at one's disposal is on the 
	energy, the way we construct these solutions corresponds to a minimal framework obtained from this natural bound, for 
	viscosity approximations of the adiabatic thermoelasticity system. The analysis in Appendix~\ref{AppA} leads to the following definition:
	\begin{definition} \label{def.mv.F}
		A dissipative measure valued solution to polyconvex thermoelasticity (\ref{sys2}), (\ref{entropy.prod})
		consists of a thermomechanical process $(y(t,x), \theta(t,x)) :  [0,T] \times \mathbb{T}^3 \to \mathbb{R}^3 \times \mathbb{R}^+$, 
		\begin{align}
		\label{regulatity.y} 
		y \in W^{1,\infty}(L^2(\mathbb{T}^3)) \cap L^\infty (W^{1,p} (\mathbb{T}^3)) \, , \quad \theta\in L^{\infty}(L^{\ell} (\mathbb{T}^3)   ) \,  ,
		\end{align}
%
		a parametrized family of probability Young measures $\bn=\bn_{(x,t)\in\bar{Q}_T},$ 
		with averages
		\begin{align*}
		F=\left\la\bn,\lambda_F \right\ra,\quad v=\left\la\bn,\lv\right\ra, \quad \theta=\left\la\bn,\lt\right\ra \, , 
		\end{align*} 
		and a nonnegative Radon measure $\bg\in\mathcal{M}^+(Q_T)$,
		where
		\begin{align}
		\label{regulatity.Fvtheta}
		F=\nabla y\in L^{\infty}(L^p), \quad  v=\partial_t y\in L^{\infty}(L^2) \, , 
		\end{align}
		$$
		\Phi(F)=(F, \mathrm{cof} F, \det F)\in L^{\infty}(L^p)\times L^{\infty}(L^q)\times L^{\infty}(L^{\rho}) \, , 
		$$
		$p\geq 4,$ $q \geq 2$, $\rho > 1$, $\ell>1$,  which satisfy the averaged equations
		\begin{align}
		\begin{split}
		\label{mv.sol.Fvt}
		\partial_t \Phi^B(F)&=\partial_{\alpha}\left(\phv \right) \\ 
		\partial_t \left\la\bn,\lvi\right\ra
		&=\partial_{\alpha}\left\la\bn,\frac{\partial \hat{\psi}}{\partial\xi^B}(\Phi(\lambda_{F}),\lambda_{\theta})\frac{\partial\Phi^B}{\partial F_{i\alpha}}(\lf)\right\ra\\
		\partial_t \left\la\bn,\hat{\eta}(\Phi(\lambda_{F}),\lt)\right\ra &\geq \left\la\bn,\frac{r}{\lt}\right\ra
		\end{split}
		\end{align}
		in the sense of  distributions, together with the integrated form of the  averaged energy equation,
		\begin{small}
			\begin{align}
			\begin{split}
			\label{mv.sol.Fvt.energy}
			\int& \varphi(0)  \left(\left\la\bn,\frac{1}{2}|\lv|^2+\hat{e}(\Phi(\lf),\lt)\right\ra (x,0)\:dx +\bg_0(dx) \right)\\
			&+\int_0^T
			\int  \varphi^{\prime}(t) \left(\left\la\bn,\frac{1}{2}|\lv|^2+\hat{e}(\Phi(\lf),\lt) \right\ra(x,t)  \, dx\:dt +\bg(dx\:dt) \right)  \\
			&=-\int_0^T\int \left\la\bn,r\right\ra\varphi(t)\:dx\:dt,
			\end{split}
			\end{align}
		\end{small}
		for all $\varphi \in C^1_c[0,T].$
	\end{definition}

	In this definition, the first equation (\ref{mv.sol.Fvt})$_1$ holds in a classical weak sense under the regularity conditions (\ref{regulatity.Fvtheta}),(\ref{regulatity.y}) 
	placed on the motion and 
	its derivatives for $p\geq 4,$ $q \geq  2,$ $\rho, \ell>1,$   as a consequence of the weak continuity of the null-Lagrangian vector $(F,\mathrm{cof} F,\det F)$ and the weak continuity of the transport-stretching  identities 
	\begin{equation}
	\label{transport.stretching}
	\begin{aligned}
	\partial_t F_{i\alpha}&=\partial_{\alpha}v_i  \\
        \partial_t \det F&=\partial_\alpha \bigl((\mathrm{cof} F)_{i\alpha}v_i\bigr)
	\\
         \partial_t(\mathrm{cof} F)_{k \gamma}&=\partial_\alpha(\epsilon_{ijk}\epsilon_{\alpha\beta\gamma}F_{j\beta}v_i).
	\end{aligned}
	\end{equation}
	We summarize the corresponding results, taken out of  \cite{MR0475169} and \cite{MR1831179}, in the following lemma.
	As the weak continuity property is important for the forthcoming analysis, we present the proof here for the reader's convenience.

	\begin{lemma} \label{lemma.weak.minors}\cite[Lemma 6.1]{MR0475169}, \cite[Lemmas 4,5]{MR1831179}
	The followings hold true:
	\begin{itemize}
	\item[$(i)$] For $y \in W^{1,\infty}(L^2(\mathbb{T}^3)) \cap L^\infty (W^{1,p} (\mathbb{T}^3))$ with $p \ge 4$,
	$F= \nabla y$ and $v = \del_t y$,  the formulas \eqref{transport.stretching} hold in the sense of distributions.
	\item[{$(ii)$}] Suppose the family $\{y^\varepsilon \}_{\varepsilon > 0}$, where  $y^{\varepsilon} : [0,\infty) \times \mathbb{T}^3 \to \mathbb{R}^3$ satisfies
		\begin{align}
		\label{y.reg}
		y^{\varepsilon} \; \mbox{is unifomly bounded in } \; W^{1,\infty}(L^2(\mathbb{T}^3)) \cap L^\infty (W^{1,p} (\mathbb{T}^3)) \, ,
		\end{align}
		and let $v^\varepsilon = \del_t y^\varepsilon$, $F^\varepsilon = \nabla y^\varepsilon$. Then, along a subsequence,
		\begin{align*}
		(F^{\varepsilon},\mathrm{cof}F^{\varepsilon},\det F^{\varepsilon})\rightharpoonup(F, \mathrm{cof}F,\det F)  ,\:\text{weakly in\:} 
		L^{\infty}(L^p) \times  L^{\infty}(L^q)\times L^{\infty}(L^{\rho})
		\end{align*}
		with $p\ge 2,\:q\ge\frac{p}{p-1},\:q\ge\frac{4}{3}$, $\rho > 1$. Moreover,  if $p\ge 4$ the identities \eqref{transport.stretching} 
		are weakly stable in the regularity class \eqref{y.reg}.
		\end{itemize}
	\end{lemma}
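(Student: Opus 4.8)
The plan is to derive both parts from the divergence (null--Lagrangian) structure of the minors, first for a single map and then stably under weak limits.

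For part $(i)$, the identity $\del_t F_{i\alpha}=\del_\alpha v_i$ is immediate from $F=\nabla y$, $v=\del_t y$ and the commutation of distributional derivatives. For the other two I would exploit the representations of the minors as spatial divergences,
\begin{equation*}
(\mathrm{cof} F)_{k\gamma}=\tfrac12\,\epsilon_{ijk}\epsilon_{\alpha\beta\gamma}\,\del_\alpha\!\big(y_i F_{j\beta}\big),\qquad \det F=\tfrac13\,\del_\alpha\!\big(y_i(\mathrm{cof} F)_{i\alpha}\big),
\end{equation*}
which hold because $\del_\alpha(\epsilon_{\alpha\beta\gamma}F_{j\beta})=0$ and the Piola identity $\del_\alpha(\mathrm{cof} F)_{i\alpha}=0$ both follow from the involution $\del_\alpha F_{i\beta}=\del_\beta F_{i\alpha}$. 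Mollifying $y$ in space to $y^\delta=\rho_\delta*y$, all quantities are smooth in $x$ and Lipschitz in $t$, so the chain rule together with $\del_t F^\delta=\del_\alpha v^\delta$ gives, for a.e.\ $t$,
\begin{equation*}
\del_t\det F^\delta=(\mathrm{cof} F^\delta)_{i\alpha}\del_\alpha v^\delta_i=\del_\alpha\!\big((\mathrm{cof} F^\delta)_{i\alpha}v^\delta_i\big),\qquad \del_t(\mathrm{cof} F^\delta)_{k\gamma}=\del_\alpha\!\big(\epsilon_{ijk}\epsilon_{\alpha\beta\gamma}F^\delta_{j\beta}v^\delta_i\big).
\end{equation*}
Since $\rho_\delta*g\to g$ strongly in $L^m(Q_T)$ for every $g\in L^m(Q_T)$ with $m<\infty$, we have $F^\delta\to F$ in $L^p(Q_T)$ and $v^\delta\to v$ in $L^2(Q_T)$, hence $\mathrm{cof} F^\delta\to\mathrm{cof} F$ in $L^{p/2}(Q_T)$; the hypothesis $p\ge4$ is exactly what makes the flux products $(\mathrm{cof} F^\delta)v^\delta$ and $F^\delta v^\delta$ converge in $L^1(Q_T)$ (H\"older with $\tfrac2p+\tfrac12\le1$ and $\tfrac1p+\tfrac12<1$), so letting $\delta\to0$ passes the three identities to the limit distributionally.

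For the weak convergence of the minors in part $(ii)$, the uniform bound \eqref{y.reg} and Banach--Alaoglu give, along a subsequence, $F^\varepsilon\rightharpoonup F$ in $L^\infty(L^p)$ and $v^\varepsilon\rightharpoonup v$ in $L^\infty(L^2)$; moreover, since $\del_t y^\varepsilon=v^\varepsilon$ is bounded in $L^\infty(L^2)$ and $\nabla y^\varepsilon$ in $L^\infty(L^p)$, the Aubin--Lions--Simon lemma yields $y^\varepsilon\to y$ strongly in $C(L^s)$ for a suitable $s$. I would then propagate weak continuity inductively through the divergence representations above: strong convergence of $y^\varepsilon$ together with $F^\varepsilon\rightharpoonup F$ makes $y^\varepsilon F^\varepsilon\rightharpoonup yF$, whence $\mathrm{cof} F^\varepsilon\rightharpoonup\mathrm{cof} F$ in $L^\infty(L^q)$; feeding this into the determinant representation, $y^\varepsilon(\mathrm{cof} F^\varepsilon)\rightharpoonup y(\mathrm{cof} F)$ gives $\det F^\varepsilon\rightharpoonup\det F$ in $L^\infty(L^\rho)$. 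The stated exponents $p\ge2$, $q\ge\max\{\tfrac{p}{p-1},\tfrac43\}$, $\rho>1$ are precisely those placing each product in a space where the limit can be identified.

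The main obstacle is the weak stability of the fluxes for $p\ge4$: identifying the limits of $(\mathrm{cof} F^\varepsilon)_{i\alpha}v^\varepsilon_i$ and $\epsilon_{ijk}\epsilon_{\alpha\beta\gamma}F^\varepsilon_{j\beta}v^\varepsilon_i$, each a product of two \emph{merely} weakly convergent sequences, for which weak convergence of the two factors does not by itself pass to the product. The key observation is that these fluxes are themselves minors of the space--time gradient $\nabla_{t,x}y^\varepsilon=(v^\varepsilon,F^\varepsilon)\in\mathbb{R}^{3\times4}$: the transport--stretching identities are exactly the space--time Piola relations expressing that the $2\times2$ and $3\times3$ minors of $\nabla_{t,x}y^\varepsilon$ are $(t,x)$-divergence-free, with $(\mathrm{cof} F^\varepsilon)_{i\alpha}v^\varepsilon_i$ a $3\times3$ minor and $\epsilon_{ijk}\epsilon_{\alpha\beta\gamma}F^\varepsilon_{j\beta}v^\varepsilon_i$ a $2\times2$ minor. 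Thus the whole statement reduces to the weak continuity of the minors of $\nabla_{t,x}y^\varepsilon$ in $\mathbb{R}^4$, and $p\ge4$ furnishes precisely the mixed integrability ($v\in L^2$, $\mathrm{cof} F\in L^{p/2}\subseteq L^2$, so $(\mathrm{cof} F)v\in L^1$) required to define and pass these minors to the limit. Where the endpoint $L^1$ product appears I would restore compactness by controlling, through the identities of part $(i)$, the time derivative of the relevant minor—e.g.\ $\del_t\mathrm{cof} F^\varepsilon$, a spatial divergence of $F^\varepsilon v^\varepsilon$, bounded in $L^\infty(W^{-1,r})$ with $r=\tfrac{2p}{p+2}\ge\tfrac43$—so that Aubin--Lions gives strong precompactness of $\mathrm{cof} F^\varepsilon$ in $C(W^{-1,s})$ and a div--curl argument yields $(\mathrm{cof} F^\varepsilon)v^\varepsilon\rightharpoonup(\mathrm{cof} F)v$ and $F^\varepsilon v^\varepsilon\rightharpoonup Fv$ in distributions, closing the weak stability.
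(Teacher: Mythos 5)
Your treatment of part $(i)$ and of the weak continuity of the minors in part $(ii)$ is correct and essentially the paper's own argument. The paper likewise proves $(i)$ by mollification (in space \emph{and} time, which lets it bypass your Lipschitz-in-$t$ chain-rule step, though your space-only variant works), with exactly the same H\"older bookkeeping using $p\ge 4$ to pass the flux products $(\mathrm{cof}F_\epsilon)v_\epsilon$ and $F_\epsilon v_\epsilon$ to the limit in $L^1$; and in part $(ii)$ it uses the same divergence representations $(\mathrm{cof}F^\varepsilon)_{i\alpha}=\tfrac12\partial_\beta(\epsilon_{ijk}\epsilon_{\alpha\beta\gamma}y^\varepsilon_j F^\varepsilon_{k\gamma})$ and $\det F^\varepsilon=\tfrac13\partial_\alpha\bigl(y^\varepsilon_i(\mathrm{cof}F^\varepsilon)_{i\alpha}\bigr)$, obtaining strong convergence of $y^\varepsilon$ by Rellich in the four-dimensional space-time domain (your Aubin--Lions--Simon route serves the same purpose) and multiplying strong times weak, first for the cofactor and then for the determinant.

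Where you part company with the paper is the final weak-stability step, and there your argument has a genuine gap. You set out to identify the weak limits of the flux products $(\mathrm{cof}F^\varepsilon)_{i\alpha}v^\varepsilon_i$ and $\epsilon_{ijk}\epsilon_{\alpha\beta\gamma}F^\varepsilon_{j\beta}v^\varepsilon_i$, and the mechanism you propose --- $\partial_t\mathrm{cof}F^\varepsilon$ bounded in $L^\infty(W^{-1,r})$, hence $\mathrm{cof}F^\varepsilon$ precompact in $C(W^{-1,s})$, then ``a div--curl argument'' against $v^\varepsilon$ --- does not close as stated: to pass to the limit in the product of a $C(W^{-1,s})$-convergent sequence with $v^\varepsilon$ you would need $v^\varepsilon$ bounded in a space of \emph{positive} spatial regularity (dual to $W^{-1,s}$), or curl-compactness for $v^\varepsilon$; but the only information is $v^\varepsilon\in L^\infty(L^2)$, and $\nabla v^\varepsilon=\partial_t F^\varepsilon$ is itself of negative order, so no classical div--curl hypothesis is met. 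Your identification of the fluxes as minors of $\nabla_{t,x}y^\varepsilon$ is the right heuristic, but the anisotropic integrability ($v\in L^\infty(L^2)$, $F\in L^\infty(L^p)$) places this outside the isotropic weak-continuity theory of minors; a repair exists (for $p\ge4>3$ one has $W^{1,p}(\mathbb{T}^3)\hookrightarrow\hookrightarrow C^0$, so $y^\varepsilon\to y$ uniformly, and writing $v^\varepsilon=\partial_t y^\varepsilon$ and integrating by parts in time onto $y^\varepsilon$, then substituting the $\varepsilon$-level identities, identifies the flux limits), but it amounts to redoing the divergence-structure argument by hand. More importantly, the flux identification is not needed: the paper concludes in one line. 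The uniform bounds \eqref{y.reg} are inherited by the weak limit, so $y$ itself belongs to the class \eqref{y.reg}; part $(i)$ applied to $y$ then gives \eqref{transport.stretching} for the limit functions, whose nonlinear entries coincide with the weak limits of $\mathrm{cof}F^\varepsilon$ and $\det F^\varepsilon$ by the weak continuity just established. That is the sense in which \eqref{transport.stretching} is ``weakly stable,'' and it sidesteps products of two merely weakly convergent factors entirely.
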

	
	\begin{proof}
		We note the formulas, for smooth maps,
		\begin{align}
		\label{cof.det.1}
		\begin{split}
		(\mathrm{cof}F)_{i\alpha}&=\frac{1}{2}\epsilon_{ijk}\epsilon_{\alpha\beta\gamma}F_{j\beta}F_{k\gamma},\\
		\det F&=\frac{1}{6}\epsilon_{ijk}\epsilon_{\alpha\beta\gamma}F_{i\alpha}F_{j\beta}F_{k\gamma}
		=\frac{1}{3}(\mathrm{cof} F)_{i \alpha}F_{i \alpha}
		\end{split}
		\end{align}
                 and 
		\begin{align*}
		\partial_t \det F&=\partial_\alpha \bigl((\mathrm{cof} F)_{i\alpha}v_i\bigr)\\
		\partial_t(\mathrm{cof} F)_{k \gamma}&=\partial_\alpha(\epsilon_{ijk}\epsilon_{\alpha\beta\gamma}F_{j\beta}v_i).
		\end{align*}

		\textbf{Step $1.$} For $y \in W^{1,\infty}(L^2(\mathbb{T}^3)) \cap L^\infty (W^{1,p} (\mathbb{T}^3))$, we extend $y$ to a function defined for all times, 
		by putting $y(t,x)=y(0,x),$ for $t\leq 0.$ The extended $y$ belongs to the same regularity class. 
		Define the convolution (in space and time) $y_{\epsilon}:=y\star f_{\epsilon},$ where 
		$f_{\epsilon}=\varrho_{\epsilon}(t)\prod_{i=1}^{3}\varrho_{\epsilon}(x_i),$ $\varrho_{\epsilon}=\frac{1}{\epsilon}\varrho\left(\frac{s}{\epsilon}\right),$ 
		for $\varrho\in C_0^{\infty}(\mathbb{R})$
		positive, $\int \rho(s) ds = 1$. Then $y_{\epsilon}\in C^{\infty}( \mathbb{R} \times\mathbb{T}^3)$ and such that
		for all $s<\infty,\:T>0:$ 
		$$\|\partial_ty_{\epsilon}-\partial_ty\|_{L^s([-T,T];L^2)}+\|y_{\epsilon}-y\|_{L^{\infty}([-T,T];W^{1,p})}\to 0.$$
		Let $F_{\epsilon}=\nabla y_{\epsilon}$ and $v_{\epsilon}=\partial_t y_{\epsilon}.$ Since the cofactor matrix is 
		bilinear in the components of $F,$ and the determinant is trilinear, it follows by repeated use
		of Hölder inequalities that for some numerical constant $C$,
		\begin{align*}
		\big \| \mathrm{cof}F_{\epsilon} - \mathrm{cof}F \big \|_{L^s ( L^{p/2} )}
		&\le  C \big \| F_{\epsilon} - F \big \|_{L^{2s}  ( L^{p} )}  \, \big \| |F_{\epsilon}| + |F | \big \|_{L^{2s}  ( L^{p} )}
		\\
		\big \| \det F_{\epsilon} - \det F \big \|_{L^s ( L^{p/3} )}
		&\le  C  \big \| F_{\epsilon} - F \big \|_{L^{3s}  ( L^{p} )}  \,  \left ( \big \| |F_{\epsilon}| + |F | \big \|_{L^{3s}  ( L^{p} )} \right )^2.
		\end{align*}
		We thus conclude: 
		\begin{align*}
		\mathrm{cof}F_{\epsilon}\to\mathrm{cof}F \quad \text{in\:\:} L^s(L^{p/2}) \, , \quad \det F_{\epsilon}\to \det F \quad \text{in\:\:} L^s(L^{p/3}) \, .
		\end{align*}
		Passing to the limit $\epsilon\to 0$, for $p\geq 4$, in the formulas
		\begin{align*}
		\partial_t(\mathrm{cof}F_{\epsilon})_{k\gamma}
		& =\partial_{\alpha} \big ( \epsilon_{ijk}\epsilon_{\alpha\beta\gamma} (F_{\epsilon})_{j\beta}(v_{\epsilon})_i \big ) 
		\\
		\partial_t(\det F_{\epsilon})
		&=\partial_{\alpha}((\mathrm{cof}F_{\epsilon})_{i \alpha}(v_{\epsilon})_i)
		\end{align*}
		we obtain \eqref{transport.stretching} in the sense of distributions and complete the proof of (i).
		
		\textbf{Step $2.$} Let $\{ y^\varepsilon \}_{\varepsilon > 0}$ be a family satisfying the uniform bound \eqref{y.reg} and let 
		$\fe=\nabla y^{\varepsilon}$ and $\ve=\partial_t y^{\varepsilon}.$ We adapt the proof of
		\cite[Lemma 6.1]{MR0475169}  suggesting to write the cofactor
		and the determinant in divergence form:
		\begin{align*}
		(\mathrm{cof}\fe)_{i\alpha}
		&=\frac{1}{2}\epsilon_{ijk}\epsilon_{\alpha\beta\gamma}\fe_{j\beta}\fe_{k\gamma}
		=\frac{1}{2}\partial_{\beta}(\epsilon_{ijk}\epsilon_{\alpha\beta\gamma}y^{\varepsilon}_{j}\fe_{k\gamma}),\\
		\det\fe&=\frac{1}{3}(\mathrm{cof}\fe)_{i \alpha}\fe_{i \alpha}
		=\frac{1}{3}\partial_{\alpha}(y^{\varepsilon}_{i}(\mathrm{cof}\fe)_{i \alpha}).
		\end{align*}
		With $p\geq2,\:q\ge\frac{p}{p-1},$ hypothesis (\ref{y.reg}) implies $y^{\varepsilon} \rightharpoonup y$  weakly in 
		$W^{1,2}_{loc}([0,\infty)\times\mathbb{T}^3)$ along subsequences and Rellich's theorem (for dimension $3+1$)
		implies  $y^{\varepsilon} \to y$ strongly in $L^z_{loc}([0,\infty)\times\mathbb{T}^3)$
		for $z<4.$ Additionally, $\fe\rightharpoonup F$ weak-$\ast$ in $L^{\infty}(L^p),$ for $p\geq2>\frac{4}{3}$
		the dual exponent to $4.$ Therefore, we can pass to the limit in the sense of distributions:
		\begin{align}
		\label{limit.cof}
		\frac{1}{2}\partial_{\beta}(\epsilon_{ijk}\epsilon_{\alpha\beta\gamma}y^{\varepsilon}_{j}\fe_{k\gamma})
		\rightharpoonup\frac{1}{2}\partial_{\beta}(\epsilon_{ijk}\epsilon_{\alpha\beta\gamma}y_{j}F_{k\gamma})
		=(\mathrm{cof}F)_{i\alpha}
		\end{align}
		and similarly for the determinant
		\begin{align}
		\label{limit.det}
		\frac{1}{3}\partial_{\alpha}(y^{\varepsilon}_{i}(\mathrm{cof}\fe)_{i \alpha})
		\rightharpoonup\frac{1}{3}\partial_{\alpha}(y_{i}(\mathrm{cof}F)_{i \alpha})=\det F,
		\end{align}
		since $\mathrm{cof}\fe\rightharpoonup\mathrm{cof}F$ weak-$\ast$ in $L^{\infty}(L^q)$, for $q>\frac{4}{3}$ the dual exponent to $4.$ 
		The distributional limits in (\ref{limit.cof})  and (\ref{limit.det}) coincide with the limits
		in the  weak-$\ast$ topology. Altogether we have 
		\begin{align*}
		(\mathrm{cof}\fe)_{i\alpha}&\rightharpoonup(\mathrm{cof}F)_{i\alpha},\quad\text{weak-$\ast$ in\:} L^{\infty}(L^q),
		\quad\text{for\:} q\geq\frac{p}{p-1},\:q>\frac{4}{3}\\
		\det\fe&\rightharpoonup\det F,\quad\text{weak-$\ast$ in\:} L^{\infty}(L^{\rho}).
		\end{align*}
		Next, note that $y^\varepsilon$, $F^\varepsilon$, $v^\varepsilon$ satisfy
		\begin{align*}
		\del_t (\mathrm{cof}\fe)_{k \gamma}
		&
		= \del_t \partial_{\alpha}( \frac{1}{2}\epsilon_{ijk}\epsilon_{\alpha\beta\gamma}y^{\varepsilon}_{i}\fe_{j \beta}) = 
		\partial_{\alpha}(  \epsilon_{ijk}\epsilon_{\alpha\beta\gamma}\fe_{j \beta}  v^{\varepsilon}_{i} )  , \\
		\del_t \det\fe &= \del_t \del_\alpha \big ( \frac{1}{3}  y^{\varepsilon}_{i}(\mathrm{cof}\fe)_{i \alpha} \big )
		= \del_\alpha \big ( \mathrm{cof} F^\varepsilon_{i \alpha} v_i^\varepsilon  \big ) .
		\end{align*}
		Using the weak continuity properties of $\mathrm{cof} F$ and $\det F$ and that for $p \ge 4$ equations \eqref{transport.stretching}
		hold for functions $y$ of class \eqref{y.reg}, we conclude that equations~\eqref{transport.stretching} are weakly stable.
\end{proof}

	\begin{remark}\rm
		On the definition of the dissipative measure-valued solution:
		\begin{enumerate}
			\item  Combining the requirements of Lemma \ref{lemma.weak.minors}, with those of Lemmas \ref{lemma1}
			and \ref{lemma2}, we must assume the exponents $p\geq 4,$ $q \geq 2,$ $\rho, \ell>1$.
			\item  Henceforth, we  assume the measure $\bg_0=0,$ meaning that we consider initial data with no 
			concentrations at time $t=0.$ 
			\item  Next, we  highlight why we choose to work with the system in the physical variables $(\Phi(F),v,\theta)$ instead of the extended ones 
			$(\xi,v,\theta)$:
			This allows to avoid imposing restrictive growth conditions on the constitutive functions with respect to the cofactor and the determinant
			derivatives. From previous works in isothermal polyconvex elastodynamics (e.g. \cite{MR1831175}) or even in 
			\cite{CGT2017}, it becomes evident that when considering the extended system, one has to impose growth 
			condition on terms 
			$$
			\frac{\partial\hat{\psi}}{\partial F}(\xi,\theta) \, ,  \quad 
			\frac{\partial\hat{\psi}}{\partial \zeta }(\xi,\theta) \frac{\del (\mathrm{cof}F )  }{\del F} \, , \quad 
			\frac{\partial\hat{\psi}}{\partial w }(\xi ,\theta) \frac{\del (\det F )  }{\del F}
			$$
			where $\xi = (F, \zeta, w)$, 
			in order to achieve representation of the associated weak limits via Young measures.
			The resulting regularity class of functions is far too restrictive and in particular functions with general power-like behavior 
			do not satisfy such assumptions and their weak-limits cannot be represented. 
			By contrast, if one works with the original variables, the growth hypotheses (\ref{gr.con.1})--\eqref{gr.con.2} 
			placed on $\psi(F,\theta)$ and $e(F,\theta)$, which are compatible with the constitutive theory, are also sufficient to allow
		        representation of the corresponding weak limits.
			\item The reasoning behind studying the integrated form of the averaged energy equation lies in the technical advantage that, one does not need to place any integrability condition on the right hand-side of the
			energy equation (\ref{sys2})$_3$, namely on the term 
			$$\frac{\partial \hat{\psi}}{\partial\xi^B}(\Phi(F),\theta) \, \ph v_i \, , $$ 
			since it appears as a divergence and its contribution integrates to zero.
		\end{enumerate}
	\end{remark}

	\section{Young measures and concentration measures} \label{sec3}
	We assume the following growth conditions on the constitutive functions $e(F,\theta),$ $\psi(F,\theta),$
	$\eta(F,\theta)$ and $\Sigma(F,\theta):$
	\begin{equation}
	\label{gr.con.1}
	c(|F|^p+\theta^{\ell})-c\leq e(F,\theta)\leq c(|F|^p+\theta^{\ell})+c\;,
	\end{equation}
	\begin{equation}
	\label{gr.con.2}
	c(|F|^p+\theta^{\ell})-c\leq\psi(F,\theta)\leq c(|F|^p+\theta^{\ell})+c\;,
	\end{equation}
	\begin{equation}
	\label{gr.con.3}
	\lim_{|F|^p+\theta^{\ell}\to\infty}\frac{|\partial_{\theta}\psi(F,\theta)|}{|F|^p+\theta^{\ell}}=
	\lim_{|F|^p+\theta^{\ell}\to\infty}\frac{|\eta(F,\theta)|}{|F|^p+\theta^{\ell}}=0\;,
	\end{equation}
	and
	\begin{equation}
	\label{gr.con.4}
	\lim_{|F|^p+\theta^{\ell}\to\infty}\frac{|\partial_{F}\psi(F,\theta)|}{|F|^p+\theta^{\ell}}=
	\lim_{|F|^p+\theta^{\ell}\to\infty}\frac{|\Sigma(F,\theta)|}{|F|^p+\theta^{\ell}}=0
	\end{equation}
	-which are consistent with the constitutive theory (\ref{con.rel.aug})-
	for some constant $c>0$ and $p\geq4,\ell>1.$ As presented in the appendix, we consider measure-valued solutions as
	limits of approximations that satisfy the uniform bound
	\begin{align}
	\label{energy_unif_bound}
	\int_{\mathbb{T}^d} \hat{e}(\Phi(F^\varepsilon),\theta^\varepsilon)+\frac{1}{2}|v^\varepsilon|^2\:dx
	\overset{(\ref{eta&e})}{=}
	\int_{\mathbb{T}^d} e(F^\varepsilon,\theta^\varepsilon)+\frac{1}{2}|v^\varepsilon|^2\:dx<C,
	\end{align}
	coming from the energy conservation equation (\ref{sys2})$_3,$ given that the radiative heat supply $r$ is a
	bounded function in $L^1(Q_T)$. The growth condition (\ref{gr.con.1}) in combination with (\ref{energy_unif_bound}) 
	suggests that the functions $\fe\in L^p,$ $\ve\in L^2,$ $\te\in L^\ell$ are all (uniformly) bounded in the respective spaces.
	The approximating sequence $U^{\varepsilon}=(\fe,\ve,\te),$ represents weak limits of the form
	\begin{align}
	\label{wk.lim.n}
	\text{wk-}\lim_{\varepsilon\to 0} f(\fe,\ve,\te)=\left\la\bn,f(F,v,\theta)\right\ra
	\end{align}
	for all continuous functions $f=f(\lf,\lv,\lt)$ such that
	\begin{equation*}
	\lim_{|\lf|^p+|\lv|^2+\lt^{\ell}\to\infty}\frac{|f(\lf,\lv,\lt)|}{|\lf|^p+|\lv|^2+\lt^{\ell}}=0\;,
	\end{equation*}
	where $\lf\in\mathbb{M}^{3\times 3}$,
	$\lv\in\mathbb{R}^3$,$\lt\in\mathbb{R}^+.$ The generated Young measure $\bn_{(x,t)}$ is associated with the motion
	$y:Q_T\to\mathbb{R}^3$ through $F$ and $v,$ by imposing that a.e.
	\begin{align*}
	F=\left\la\bn,\lf\right\ra,\quad v=\left\la\bn,\lv\right\ra, \quad \theta=\left\la\bn,\lt\right\ra
	\end{align*}
	and its action is well-defined for all functions $f$ that grow slower than the energy.
	To take into account the formation of concentration effects, we introduce the concentration measure $\bg,$ depending on the total energy. This is a well-defined nonnegative Radon measure
	for a subsequence of $\ds e(F^\varepsilon,\theta^\varepsilon)+\frac{1}{2}|v^\varepsilon|^2.$ 
	To prove this claim, let us define the sets
	\begin{align*}
	&\mathcal{F}_0=\left\{h\in C^b(\mathbb{R}^d):\;h^{\infty}(z)=\lim_{s\to\infty}h(sz)\;
	\text{exists and is continuous on}\; S^{d-1} \right\}\\
	&\mathcal{F}_1=\left\{g\in C(\mathbb{R}^d):\;g(z)=h(z)(1+|z|),\; h\in\mathcal{F}_0 \right\}.
	\end{align*}
	Let $X$ be a locally compact Hausdorff space, where we define the set of all Radon measures $\mathcal{M}(X)$ and all
	positive Radon measures $\mathcal{M}^+(X),$ while $\mathrm{Prob}(X)$ denotes all probability measures on $X.$
	Let $\Omega$ be any open subset of $\mathbb{R}^d$ and fix a Radon measure $\lambda$ on $\Omega$. We denote by
	$\mathcal{P}(\lambda;X)=L^{\infty}_w(d\lambda;\mathrm{Prob}(X))$  the parametrized families of probability measures
	$(\bn_z)_{z\in\Omega}$ acting on $X$ which are weakly measurable with respect to $z\in\Omega.$ When $\lambda$ is the
	Lebesgue measure, we use the notation $\mathcal{P}(\lambda;X)=\mathcal{P}(\Omega;X).$
	
	The following theorem as it appears in \cite{ab97,dm87} uses the theory of generalized Young measures to describe weak 
	limits of the form
	\begin{align*}
	\lim_{n\to\infty}\int_{\Omega}\phi(x)g(u_n(x))\:dx,
	\end{align*}
	for $\phi\in C^0(\Omega),$ any bounded sequence $u_n$ in $L^1,$ and test functions $g$ such that
	\begin{align*}
	g(z)=\bar{g}(z)(1+|z|), \qquad \bar{g}\in C^b(\mathbb{R}^d).
	\end{align*}
	\begin{align}
	\label{res.fun.property}
	\begin{split}
	&\text{For $g\in\mathcal{F}_1,$ the $L^1$-recession function}\;\;
	g^{\infty}(z)=\lim_{s\to\infty}\frac{g(sz)}{s}, \;\; z\in S^{d-1},\;\; \text{coincides}\\
	&\text{with $h^{\infty}(z),$ where $h\in\mathcal{F}_0$ and $g(z)=h(z)(1+|z|).$}
	\end{split}
	\end{align}
	\begin{theorem}\label{DP-M&A-B}
		Let $\{u_n\}$ be bounded in $L^1(\Omega;\mathbb{R}^d).$ There exists a subsequence $\{u_{n_{k}}\}$, a nonnegative Radon measure $\boldsymbol{\mu}\in\mathcal{M}^+(\Omega)$ and parametrized families of probability measures 
		\begin{align*}
		\bn\in\mathcal{P}(\Omega;\mathbb{R}^d), \qquad \bn^{\infty}\in\mathcal{P}(\lambda;S^{d-1})
		\end{align*}
		such that
		\begin{align*}
		g(u_{n_{k}}) \rightharpoonup \left\la\bn,g\right\ra+\left\la\bn^{\infty},g^{\infty}\right\ra\boldsymbol{\mu}
		\quad \text{weak-$\ast$ in}\:\:\mathcal{M}^+(\Omega),
		\end{align*}
		for any $g\in\mathcal{F}_1.$
	\end{theorem}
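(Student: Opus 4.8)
The plan is to reduce everything to a single weak-$*$ compactness statement on a compactification of the target space $\mathbb{R}^d$, followed by a disintegration (slicing) of the limit measure, in the spirit of \cite{ab97,dm87}. First I would compactify $\mathbb{R}^d$ via the radial homeomorphism $z\mapsto z/(1+|z|)$ onto the open unit ball, adjoining the sphere $S^{d-1}$ ``at infinity'' to obtain a compact metric space $\overline{\mathbb{R}^d}=\mathbb{R}^d\cup S^{d-1}$. The role of this construction is that every $h\in\mathcal{F}_0$ extends to an element of $C(\overline{\mathbb{R}^d})$ whose trace on $S^{d-1}$ is exactly $h^{\infty}$; hence, writing $g=h(1+|z|)$ for $g\in\mathcal{F}_1$, the recession function $g^{\infty}$ from (\ref{res.fun.property}) coincides with this boundary trace.

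Next I would encode the entire sequence into one family of measures on the product space (working on $\overline{\Omega}$, which we may take compact after the standard exhaustion, as is the case for the domain $Q_T$ of interest). Define $\tilde{\sigma}_n\in\mathcal{M}^+(\overline{\Omega}\times\overline{\mathbb{R}^d})$ by $\langle\tilde{\sigma}_n,\Psi\rangle=\int_{\Omega}\Psi(x,u_n(x))\,(1+|u_n(x)|)\,dx$ for $\Psi\in C(\overline{\Omega}\times\overline{\mathbb{R}^d})$. Since $\{u_n\}$ is bounded in $L^1$, the total masses $\tilde{\sigma}_n(\overline{\Omega}\times\overline{\mathbb{R}^d})=\int_{\Omega}(1+|u_n|)\,dx$ are uniformly bounded, so by separability of $C(\overline{\Omega}\times\overline{\mathbb{R}^d})$ and the Banach--Alaoglu theorem I can pass to a single subsequence with $\tilde{\sigma}_{n_k}\rightharpoonup\tilde{\sigma}$ weak-$*$. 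The decisive advantage is that this one subsequence handles all $g\in\mathcal{F}_1$ at once: for $\phi\in C(\overline{\Omega})$ one has $\int_{\Omega}\phi\,g(u_{n_k})\,dx=\langle\tilde{\sigma}_{n_k},\phi\otimes h\rangle\to\langle\tilde{\sigma},\phi\otimes h\rangle$, because $\phi\otimes h\in C(\overline{\Omega}\times\overline{\mathbb{R}^d})$.

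I would then disintegrate the limit. Let $\lambda$ be the projection of $\tilde{\sigma}$ onto $\overline{\Omega}$, which by weak-$*$ continuity of the pushforward is the limit of the measures $(1+|u_{n_k}|)\,dx$; the slicing theorem gives $\tilde{\sigma}=\lambda\otimes\sigma_x$ with $\sigma_x\in\mathrm{Prob}(\overline{\mathbb{R}^d})$ weakly measurable in $x$. Separating the part of $\sigma_x$ carried by $S^{d-1}$ from the finite part, and using $\langle\sigma_x,h\rangle=\int_{\mathbb{R}^d}\frac{g}{1+|z|}\,\sigma_x(dz)+\int_{S^{d-1}}g^{\infty}\,\sigma_x(dz)$, I would define $\bn$ by normalizing $(1+|z|)^{-1}\sigma_x|_{\mathbb{R}^d}$ to a probability measure, set $\boldsymbol{\mu}(dx)=\sigma_x(S^{d-1})\,\lambda(dx)$, and define $\bn^{\infty}$ by normalizing $\sigma_x|_{S^{d-1}}$. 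This yields $g(u_{n_k})\rightharpoonup\langle\bn,g\rangle\,m+\langle\bn^{\infty},g^{\infty}\rangle\boldsymbol{\mu}$ weak-$*$ in $\mathcal{M}^+(\Omega)$, where $m(dx)=\big(\int_{\mathbb{R}^d}(1+|z|)^{-1}\sigma_x(dz)\big)\,\lambda(dx)$.

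The step I expect to carry the real content is the identification $m=dx$, i.e. that the finite part reproduces ordinary Lebesgue measure, so that the oscillation term is genuinely $\langle\bn,g\rangle$ integrated against $dx$. I would extract this by testing the representation with $g\equiv 1$, which belongs to $\mathcal{F}_1$ with $h=(1+|z|)^{-1}\in\mathcal{F}_0$ and $g^{\infty}=0$: on the left $g(u_{n_k})\equiv 1$ forces the limit to be Lebesgue measure on $\Omega$, while on the right the concentration term drops out and $\langle\bn,1\rangle=1$, leaving exactly $m$; hence $m=dx$. The remaining items are routine measure-theoretic bookkeeping: the weak measurability of $x\mapsto\bn_x$ and $x\mapsto\bn^{\infty}_x$ furnished by the disintegration, and the fact that the two normalizations are well defined for $\lambda$- respectively $\boldsymbol{\mu}$-almost every $x$.
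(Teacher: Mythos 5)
The paper itself contains no proof of this theorem: it is quoted from the generalized Young measure literature, with the proof delegated to \cite{ab97,dm87}. Your argument is precisely the standard proof from those references: compactify $\mathbb{R}^d$ to $\overline{\mathbb{R}^d}=\mathbb{R}^d\cup S^{d-1}$ via $z\mapsto z/(1+|z|)$; encode the sequence in the measures $\la\tilde{\sigma}_n,\Psi\ra=\int_\Omega\Psi(x,u_n(x))\,(1+|u_n(x)|)\,dx$ on the compact product; extract one weak-$\ast$ limit by Banach--Alaoglu (this is indeed what makes a single subsequence work for all $g\in\mathcal{F}_1$ simultaneously); disintegrate $\tilde{\sigma}=\lambda\otimes\sigma_x$; split $\sigma_x$ into its part on $\mathbb{R}^d$, reweighted by $(1+|z|)^{-1}$ and normalized to give $\bn$, and its part on $S^{d-1}$, giving $\bn^\infty$ and $\boldsymbol{\mu}(dx)=\sigma_x(S^{d-1})\,\lambda(dx)$; and identify the base measure $m$ of the oscillation term as Lebesgue measure by testing with $g\equiv 1$, i.e. $h=(1+|z|)^{-1}$, $g^\infty=0$. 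All steps are sound, including the point you defer to "bookkeeping": since $m=c(x)\lambda=dx$ with $c(x)=\int_{\mathbb{R}^d}(1+|z|)^{-1}\,d\sigma_x$, the set $\{c=0\}$ is Lebesgue-null, so the normalization defining $\bn_x$ is admissible a.e., and the appeal to exhaustion is harmless since the relevant domain $\bar{Q}_T$ is already compact.

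One step deserves a caveat: your claim that \emph{every} $h\in\mathcal{F}_0$ extends to an element of $C(\overline{\mathbb{R}^d})$ is false under the paper's literal definition of $\mathcal{F}_0$, which only demands that the radial limits $h^\infty(z)=\lim_{s\to\infty}h(sz)$ exist pointwise and be continuous on $S^{d-1}$. In the plane, take $h=\beta(r\theta-1)$ in a sector around $\theta=0$ (polar coordinates, smooth cutoff outside, $\beta$ supported in $(-\tfrac12,\tfrac12)$ with $\beta(0)=1$): every radial limit vanishes, so $h^\infty\equiv 0$ is continuous, yet $h=1$ along $(r,\theta)=(k,1/k)$, so no continuous extension to the compactification exists. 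Worse, the representation formula itself fails for such $h$: a sequence $u_n$ taking the value $(r,\theta)=(n,1/n)$ on a set of measure $1/n$ and $0$ elsewhere produces a concentration of mass in the limit of $g(u_n)=h(u_n)(1+|u_n|)$ that the right-hand side, with $\bn=\delta_0$ and $g^\infty=h^\infty=0$, cannot see. The resolution is that $\mathcal{F}_0$ must be read as in \cite{ab97,dm87}: the full limit $h(sz')\to h^\infty(z)$ as $s\to\infty$, $z'\to z\in S^{d-1}$ is required, which is exactly continuity of the extension to $\overline{\mathbb{R}^d}$ — and exactly what your proof uses. So the defect lies in the paper's statement of $\mathcal{F}_0$, not in your argument; with that reading, your proof is complete and coincides with the cited one.
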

	Given that the only available bound for the approximate sequence $U^{\varepsilon}$ is of the form
	\begin{align*}
	\int_{\mathbb{T}^d} f(\fe,\ve,\te)\:dx<C,
	\end{align*}
	we want to represent the weak limits wk-$\ast$ $\ds\lim_{\varepsilon\to 0}f(\fe,\ve,\te),$ for a continuous test
	function $f$ satisfying the growth condition
	\begin{align*}
	|f(\fe,\ve,\te)|\leq C (1+|F|^p+|v|^2+\theta^{\ell}).
	\end{align*}
	In order to apply Theorem \ref{DP-M&A-B}, we perform the change of variables 
	\begin{align*}
	(A,b,c)=(|F|^{p-1}F,|v|v,\theta^{\ell})
	\end{align*}
	and define
	\begin{align*}
	f(F,v,\theta):=g(|F|^{p-1}F,|v|v,\theta^{\ell}),
	\end{align*}
	imposing that the function $g$ grows like
	\begin{align*}
	|g(A,b,c)|\leq C (1+|A|+|b|+|c|).
	\end{align*}
	Then Theorem~\ref{DP-M&A-B} applies to represent the wk-$\ast$ limits of $g$:
	\begin{align*}
	g^{\infty}(A,b,c)=\lim_{s\to\infty}\frac{g(sA,sb,sc)}{1+s(A,b,c)},
	\end{align*}
	for all $(A,b,c)\in S^{d^2+d}\cap\{c>0\}.$ Consequently, there exist a nonnegative Borel measure
	$M\in\mathcal{M}^+(\bar{Q}_T)$ and probability measures $N\in\mathcal{P}(\bar{Q}_T;\mathbb{R}^{d^2+d+1}),$ $N^{\infty}\in\mathcal{P}(\bar{Q}_T;S^{d^2+d})$ which up to subsequence
	\begin{align*}
	g(A_n,b_n,c_n)
	\rightharpoonup \left\la N,g(\lambda_{A},\lambda_{b},\lambda_{c})\right\ra 
	+\left\la N^{\infty},g^{\infty}(\lambda_{A},\lambda_{b},\lambda_{c})\right\ra M.
	\end{align*}
	Then, property (\ref{res.fun.property}) implies that
	\begin{align*}
	f(F_n,v_n,\theta_n)
	\rightharpoonup \left\la\bn,f(\lambda_{F},\lambda_{v},\lambda_{\theta})\right\ra 
	+\left\la\bn^{\infty},f^{\infty}(\lambda_{F},\lambda_{v},\lambda_{\theta})\right\ra M
	\end{align*}
	where 
	\begin{align*}
	\left\la\bn,f(\lambda_{F},\lambda_{v},\lambda_{\theta})\right\ra =
	\left\la N,g(|\lambda_{F}|^{p-1}\lambda_{F},|\lambda_{v}|\lambda_{v},\lambda_{\theta}^{\ell})\right\ra 
	\end{align*}
	and
	\begin{align*}
	\left\la\bn^{\infty},f(\lambda_{F},\lambda_{v},\lambda_{\theta})\right\ra =
	\left\la N,g^{\infty}(|\lambda_{F}|^{p-1}\lambda_{F},|\lambda_{v}|\lambda_{v},\lambda_{\theta}^{\ell})\right\ra .
	\end{align*}
	Therefore, given the bound (\ref{energy_unif_bound}) and assuming that the recession function
	\begin{align*}
	\left(e(F,\theta)+\frac{1}{2}|v|^2\right)^{\infty}=
	\lim_{s\to\infty}\frac{e\left(s^{1/p}F,s^{1/\ell}\theta\right)+\ds\frac{s}{2}|v|^2}
	{1+s(|F|^p,|v|^2,\theta^{\ell})},
	\end{align*}
	exists and is continuous for all $(|F|^{p-1}F,|v|v,\theta^{\ell})\in
	S^{d^2+d}\cap\{c>0\},$ we have that (along a subsequence)
	\begin{small}
		\begin{align*}
		\text{wk-$\ast$-}\lim_{\varepsilon\to 0}\left(\hat{e}(\Phi(\fe),\te)+\frac{1}{2}|\ve|^2\right)
		&=\left\la\bn,e(\lf,\lt)+\frac{1}{2}|\lv|^2\right\ra\\
		&\quad+\left\la\bn^{\infty},\left(e(\lf,\lt)+\frac{1}{2}|\lv|^2\right)^{\infty}\right\ra M\;,
		\end{align*}
	\end{small}
	recalling~(\ref{eta&e}$)_2$. Then (\ref{gr.con.1}) implies that $\left(e(\lf,\lt)+\frac{1}{2}|\lv|^2\right)^{\infty}>0,$
	therefore 
	\begin{small}
		\begin{align}
		\begin{split}
		\label{def.gamma}
		\bg:=\left\la\bn^{\infty},\left(\frac{1}{2}|\lv|^2+e(\lf,\lt)\right)^{\infty}\right\ra M
		=\left\la\bn^{\infty},\left(\frac{1}{2}|\lv|^2+\hat{e}(\Phi(\lf),\lt)\right)^{\infty}\right\ra M
		\in\mathcal{M}^+(\bar{Q}_T).
		\end{split}
		\end{align}
	\end{small}
	
	\section{The averaged relative entropy inequality} \label{sec4}
	The augmented system~\eqref{sysaug} belongs to a general class of hyperbolic systems of the form 
	\begin{equation*}
	\partial_t A(U)+\partial_{\alpha}f_{\alpha}(U)=0
	\end{equation*}
	where $U=U(x,t)\in\mathbb{R}^n,$ is the unknown with $x\in\mathbb{R}^d,$ $t\in\mathbb{R}^+$
	and $A,f_{\alpha}:\mathbb{R}^n\to\mathbb{R}^n$ are given smooth functions of $U.$ 
	It is symmetrizable in the sense of Friedrichs and Lax~\cite{MR0285799}, under appropriate hypotheses: The map 
	$A(U)$ is globally invertible and there exists an entropy-entropy flux pair $(H,q)$, \textit{i.e.} there exists a 
	smooth multiplier $G(U):\mathbb{R}^{n}\to\mathbb{R}^{n}$ such that
	\begin{align*}
	\nabla H&=G\cdot\nabla A\\
	\nabla q_\alpha&=G\cdot\nabla f_\alpha,\quad \alpha=1,\dots,d.
	\end{align*}
	In our case
	\begin{small}
		$$U=\left( \begin{array}{c} \Phi(F)\\ v\\ \theta  \end{array} \right ),\;\;
		A(U)=\left( \begin{array}{c} \Phi(F)\\ v\\ \frac{1}{2} |v|^2+\hat{e}(\Phi(F), \theta) \end{array} \right ),\;\;
		f_\alpha (U)= \left( \begin{array}{c} \phv \\   \frac{\partial \hat{\psi}}{\partial\xi^B}(\Phi(F),\theta)\ph  \\ 
		\frac{\partial \hat{\psi}}{\partial\xi^B}(\Phi(F),\theta)\ph v_i  \end{array} \right )\; ,$$ 	
	\end{small}
	while the (mathematical) entropy is given by $H(U)=-\hat\eta(\Phi(F),\theta),$ the entropy flux $q_{\alpha} = 0$
	and the associated multiplier is
	$$G(U)=\frac{1}{\theta}\left(\frac{\partial\hat{\psi}}{\partial\xi^B}(\Phi(F),\theta),v,-1\right)^T,\quad 
	B=1,\dots,19$$ see \cite{christoforou2016relative,CGT2017}. 
	
	Consider a strong solution $(\Phi(\bar{F}),\bar{v},\bar{\theta})^T \in W^{1,\infty}(Q_T)$ to (\ref{sys2})
	that satisfies the entropy identity (\ref{entropy.prod}) and a dissipative measure valued solution to 
	(\ref{sys2}),~(\ref{entropy.prod}) according to Definition \ref{def.mv.F}. We write the difference of the weak form of 
	equations (\ref{sys2}),~(\ref{entropy.prod}) and (\ref{mv.sol.Fvt}),~(\ref{mv.sol.Fvt.energy}) to obtain the following 
	three integral identities
	\begin{align}
	\begin{split}
	\int(\Phi^B(F)-\Phi^B(\bar{F}))(x,0)&\phi_1(x,0)\:dx +\int_0^T \int(\Phi^B(F)-\Phi^B(\bar{F}))\partial_t\phi_1(x,t)\:dx\:dt\\
	&=\int_0^T \int \left(\phv-\phbv\right)\partial_{\alpha}\phi_1(x,t)\:dx\:dt,
	\end{split}
	\label{eq1.wk.mv}
	\end{align}
	\begin{align}
	\begin{split}
	\int&(\left\la\bn,\lvi\right\ra-\bar{v}_i)(x,0)\phi_2(x,0)\:dx
	+\int_0^T\int (\left\la\bn,\lvi\right\ra-\bar{v}_i)\partial_t\phi_2(x,t)\:dx\:dt\\
	&=\int_0^T\int \left(\left\la\bn,\frac{\partial\hat{\psi}}{\partial\xi^B}(\Phi(\lambda_{F}),\lambda_{\theta})\phl\right\ra
	-\frac{\partial\hat{\psi}}{\partial\xi^B}(\Phi(\bar{F}),\bar{\theta})\phb\right)\partial_{\alpha}\phi_2(x,t)\:dx\:dt\;,
	\end{split}
	\label{eq2.wk.mv}
	\end{align}
	and
	\begin{align}
	\label{eq3.wk.mv}
	\begin{split}
	\int&\left(\left\la\bn,\frac{1}{2}|\lv|^2+\hat{e}(\Phi(\lf),\lt)\right\ra
	-\frac{1}{2}|\bar{v}|^2-\hat{e}(\Phi(\bar{F}),\bar{\theta})\right)\!(x,0)\:\phi_3(x,0)\:dx \\
	&\;+\int_0^T \int \left\{\left(\left\la\bn,\frac{1}{2}|\lv|^2+\hat{e}(\Phi(\lf),\lt)\right\ra
	-\frac{1}{2}|\bar{v}|^2-\hat{e}(\Phi(\bar{F}),\bar{\theta})\right)+\bg\right\}\partial_t\phi_3(x,t)\:dx\:dt\\
	&=-\int_0^T \int (\left\la\bn,r\right\ra-\bar{r})\phi_3(x,t)\:dx\:dt,
	\end{split}
	\end{align}
	for any $\phi_i\in C^1_c(Q_T)$, $i=1, 2$ and $\phi_3\in C^1_c[0,T).$ Similarly, testing the difference of 
	(\ref{entropy.prod}) and (\ref{mv.sol.Fvt})$_3$ against $\phi_4\in C^1_c(Q_T),$  with $\phi_4\ge 0$, we have
	\begin{align}
	\label{entr1.wka.mv}
	\begin{split}
	-\int&(\left\la\bn,\hat{\eta}(\Phi(\lf),\lt)\right\ra-\hat{\eta}(\Phi(\bar{F}),\bar{\theta}))(x,0)\phi_4(x,0)\:dx\\
	&\quad-\int_0^T\int(\left\la\bn,\hat{\eta}(\Phi(\lf),\lt)\right\ra-\hat{\eta}(\Phi(\bar{F}),\bar{\theta}))\partial_t\phi_4(x,t)\:dx\:dt\\ &\geq\int_0^T\int\left(\left\la\bn,\frac{r}{\lt}\right\ra-\frac{\bar{r}}{\bar{\theta}}\right)\phi_4(x,t)\:dx\:dt.
	\end{split}
	\end{align}
	We then choose $(\phi_1,\phi_2,\phi_3)=-\bar\theta\,G(\bar{U})\varphi(t)=( -\frac{\partial\hat{\psi}}{\partial\xi^B}(\Phi(\bar{F}),\bar{\theta}) ,-\bar{v},1)^T\varphi(t)$, 
	for some $\varphi\in C_c^1[0,T]$, thus (\ref{eq1.wk.mv}), (\ref{eq2.wk.mv}) and (\ref{eq3.wk.mv}) become
	\begin{small}
		\begin{align}
		\begin{split}
		\label{eq1phi.wk.mv}
		\int &\left(-\frac{\partial\hat{\psi}}{\partial\xi^B}(\Phi(\bar{F}),\bar{\theta})(\Phi^B(F)-\Phi^B(\bar{F}))\right)(x,0)\varphi(0)\:dx \\
		&\qquad+\int_0^T\int\left(-\frac{\partial\hat{\psi}}{\partial\xi^B}(\Phi(\bar{F}),\bar{\theta})(\Phi^B(F)-\Phi^B(\bar{F}))\right)\varphi^{\prime}(t)\:dx\:dt\\
		&=\int_0^T\int \left[\partial_t\Big(\frac{\partial\hat{\psi}}{\partial\xi^B}(\Phi(\bar{F}),\bar{\theta})\Big)(\Phi^B(F)-\Phi^B(\bar{F}))\right.\\
		&\qquad\quad\left.-\partial_{\alpha}\Big(\frac{\partial\hat{\psi}}{\partial\xi^B}(\Phi(\bar{F}),\bar{\theta})\Big)\Big(\phv-\phbv\Big)\right]\varphi(t) dxdt \, ,
		\end{split}
		\end{align}
	\end{small}
	\begin{align}
	\label{eq2phi.wk.mv}
	\begin{split}
	\int&(-\bar{v_i}(\left\la\bn,\lvi\right\ra-\bar{v_i}))(x,0)\varphi(0)\:dx 
	+\int_0^T\int-\bar{v_i}(\left\la\bn,\lvi\right\ra-\bar{v_i})\varphi^{\prime}(t)\:dx\:dt\\
	&=-\int_0^T\int\left[-\partial_{\alpha}\left(\frac{\partial\hat{\psi}}{\partial\xi^B}(\Phi(\bar{F}),\bar{\theta})\phb\right)(\left\la\bn,\lvi\right\ra-\bar{v}_i)\right.\\
	&\:\:\left.+\partial_{\alpha}\bar{v_i}\left(\left\la\bn,\frac{\partial\hat{\psi}}{\partial\xi^B}(\Phi(\lambda_{F}),\lambda_{\theta})\phl\right\ra
	-\frac{\partial\hat{\psi}}{\partial\xi^B}(\Phi(\bar{F}),\bar{\theta})\phb\right)\right]\varphi(t)\:dx\:dt\;,
	\end{split}
	\end{align} 
	and
	\begin{align}
	\begin{split}
	\label{eq3phi.wk.mv}
	\int&\left(\left\la\bn,\frac{1}{2}|\lv|^2+\hat{e}(\Phi(\lf),\lt)\right\ra
	-\frac{1}{2}|\bar{v}|^2-\hat{e}(\Phi(\bar{F}),\bar{\theta})\right)\!(x,0)\;\varphi(0)\:dx \\
	&\quad+\int_0^T \int \left\{\left(\left\la\bn,\frac{1}{2}|\lv|^2+\hat{e}(\Phi(\lf),\lt)\right\ra
	-\frac{1}{2}|\bar{v}|^2-\hat{e}(\Phi(\bar{F}),\bar{\theta})\right)+\bg\right\}\varphi^{\prime}(t)\:dx\:dt\\
	&=-\int_0^T \int (\left\la\bn,r\right\ra-\bar{r})\varphi(t)\:dx\:dt.
	\end{split}
	\end{align}
	For inequality (\ref{entr1.wka.mv}), we choose accordingly $\phi_4:=\bar{\theta}\varphi(t)\geq 0$, $\varphi \geq 0$ so that
	\begin{align}
	\begin{split}
	\label{entr2.wk.mv}
	-\int\bar{\theta}&(\left\la\bn,\hat{\eta}(\Phi(\lf),\lt)\right\ra-\hat{\eta}(\Phi(\bar{F}),\bar{\theta}))(x,0)\varphi(0)\:dx\\
	&\quad-\int_0^T\int\bar{\theta}(\left\la\bn,\hat{\eta}(\Phi(\lf),\lt)\right\ra-\hat{\eta}(\Phi(\bar{F}),\bar{\theta}))\varphi^{\prime}(t)\:dx\:dt\\
	&\geq\int_0^T\int \left[\partial_t\bar{\theta}(\left\la\bn,\hat{\eta}(\Phi(\lf),\lt)\right\ra-\hat{\eta}(\Phi(\bar{F}),\bar{\theta}))
	+\bar{\theta}\left(\left\la\bn,\frac{r}{\lt}\right\ra-\frac{\bar{r}}{\bar{\theta}}\right)\right]\varphi(t)\:dx\:dt.
	\end{split}
	\end{align}
	
	Adding together (\ref{eq1phi.wk.mv}), (\ref{eq2phi.wk.mv}), (\ref{eq3phi.wk.mv}) and (\ref{entr2.wk.mv}), we obtain the integral inequality
	\begin{small}
		\begin{align}
		\label{entr.eq.1.mv}
		\int&\varphi(0)\bigg[-\frac{\partial\hat{\psi}}{\partial\xi^B}(\Phi(\bar{F}),\bar{\theta})(\Phi^B(F)-\Phi^B(\bar{F}))(x,0)-\la\bn,\bar{v}_i(\lvi-\bar{v}_i)\ra(x,0) \nonumber \\
		&\qquad
		+\left\la\bn,\frac{1}{2}|\lv|^2+\hat{e}(\Phi(\lf),\lt)-\frac{1}{2}|\bar{v}|^2-\hat{e}(\Phi(\bar{F}),\bar{\theta})\right\ra(x,0) \nonumber \\
		&\qquad\qquad\qquad\qquad\qquad\qquad
		-\bar{\theta}\la\bn,\hat{\eta}(\Phi(\lf),\lt)-\hat{\eta}(\Phi(\bar{F}),\bar{\theta})\ra(x,0)\bigg]\:dx \nonumber \\
		&+\int_{0}^{T}\!\!\!\!\int\varphi'(t)\bigg[-\frac{\partial\hat{\psi}}{\partial\xi^B}(\Phi(\bar{F}),\bar{\theta})(\Phi^B(F)-\Phi^B(\bar{F}))-\la\bn,\bar{v}_i(\lvi-\bar{v}_i)\ra \nonumber \\
		&\qquad\qquad
		+\left\la\bn,\frac{1}{2}|\lv|^2+\hat{e}(\Phi(\lf),\lt)-\frac{1}{2}|\bar{v}|^2-\hat{e}(\Phi(\bar{F}),\bar{\theta})\right\ra \nonumber \\
		&\qquad\qquad\qquad\qquad\qquad\qquad
		-\bar{\theta}\la\bn,\hat{\eta}(\Phi(\lf),\lt)-\hat{\eta}(\Phi(\bar{F}),\bar{\theta})\ra+\bg\bigg]\:dx\:dt \nonumber \\
		&\!\!\!\!\!\!
		\geq-\int_{0}^{T}\!\!\!\!\int\varphi(t)\bigg[-\partial_t\left(\frac{\partial\hat{\psi}}{\partial\xi^B}(\Phi(\bar{F}),\bar{\theta})\right)\!\!(\Phi^B(F)-\Phi^B(\bar{F})) \nonumber \\
		&\qquad\qquad\qquad
		+\partial_{\alpha}\left(\frac{\partial\hat{\psi}}{\partial\xi^B}(\Phi(\bar{F}),\bar{\theta})\right)\left(\phv-\phbv\right) \nonumber \\
		&\qquad\qquad\qquad
		+\partial_{\alpha}\bar{v}_i\left\la\bn,\frac{\partial\hat{\psi}}{\partial\xi^B}(\Phi(\lambda_{F}),\lambda_{\theta})\phl
		-\frac{\partial\hat{\psi}}{\partial\xi^B}(\Phi(\bar{F}),\bar{\theta})\phb\right\ra \nonumber \\
		&\qquad\qquad\qquad
		-\partial_t\bar{\theta}\la\bn,\hat{\eta}(\Phi(\lf),\lt)-\hat{\eta}(\Phi(\bar{F}),\bar{\theta})\ra \nonumber \\
		&\qquad\qquad\qquad
		-\partial_{\alpha}\left(\frac{\partial\hat{\psi}}{\partial\xi^B}(\Phi(\bar{F}),\bar{\theta})\phb\right)\la\bn,(\lvi-\bar{v}_i)\ra
		-\bar{\theta}\left\la\bn,\frac{r}{\lt}-\frac{\bar{r}}{\bar{\theta}}\right\ra+\la\bn,r-\bar{r}\ra\bigg]\:dx\:dt\nonumber \\
		&\!\!\!\!\!\!
		=:-\int_{0}^{T}\int\varphi(t)K(x,t)\:dx\:dt\;.
		\end{align}
	\end{small}
	Using the entropy identity (\ref{entropy.prod}) and the null-Lagrangian property (\ref{null-Lan}), the quantity 
	$K(x,t)$ in the integrand on the right hand-side of (\ref{entr.eq.1.mv}) becomes
	\begin{small}
		\begin{align}
		\label{entr.eq.2.mv}
		K&=-\partial_t\bar{\theta}\left\la\bn,\hat{\eta}(\Phi(\lf),\lt|\Phi(\bar{F}),\bar{\theta})\right\ra
		+\partial_t\Phi^B(\bar{F})\left\la\bn,\frac{\partial\hat{\psi}}{\partial \xi^B}(\Phi(\lf),\lt|\Phi(\bar{F}),\bar{\theta})\right\ra \nonumber\\
		&-\partial_t\bar{\theta}\:\frac{\partial\hat{\eta}}{\partial\theta}(\Phi(\bar{F}),\bar{\theta})\la\bn,\lt-\bar{\theta}\ra \nonumber\\
		&-\partial_t\Phi^B(\bar{F})\left(\left\la\bn,\frac{\partial\hat{\psi}}{\partial\xi^B}(\Phi(\lambda_{F}),\lambda_{\theta})
		-\frac{\partial\hat{\psi}}{\partial\xi^B}(\Phi(\bar{F}),\bar{\theta})\right\ra
		-\frac{\partial^2 \hat{\psi}}{\partial\xi^B\partial\theta}(\Phi(\bar{F}),\bar{\theta})\la\bn,\lt-\bar{\theta}\ra\!\right)\nonumber\\
		&+\partial_{\alpha}\left(\frac{\partial\hat{\psi}}{\partial\xi^B}(\Phi(\bar{F}),\bar{\theta})\right)\!\!\left(\phv-\phbv\right)
		-\partial_{\alpha}\left(\frac{\partial\hat{\psi}}{\partial\xi^B}(\Phi(\bar{F}),\bar{\theta})\phb\right)\la\bn,(\lvi-\bar{v}_i)\ra\nonumber\\
		&+\partial_{\alpha}\bar{v}_i\left\la\bn,\frac{\partial\hat{\psi}}{\partial\xi^B}(\Phi(\lambda_{F}),\lambda_{\theta})\phl
		-\frac{\partial\hat{\psi}}{\partial\xi^B}(\Phi(\bar{F}),\bar{\theta})\phb\right\ra
		-\bar{\theta}\left\la\bn,\frac{r}{\lt}-\frac{\bar{r}}{\bar{\theta}}\right\ra+\la\bn,r-\bar{r}\ra\nonumber\\
		&\!\!\!\!\!\!=-\partial_t\bar{\theta}\left\la\bn,\hat{\eta}(\Phi(\lf),\lt|\Phi(\bar{F}),\bar{\theta})\right\ra
		+\partial_t\Phi^B(\bar{F})\left\la\bn,\frac{\partial\hat{\psi}}{\partial \xi^B}(\Phi(\lf),\lt|\Phi(\bar{F}),\bar{\theta})\right\ra
		-\frac{\bar{r}}{\bar{\theta}}\left\la\bn,\lt-\bar{\theta}\right\ra\nonumber\\
		&-\phb\partial_{\alpha}\bar{v}_i\left\la\bn,\frac{\partial\hat{\psi}}{\partial\xi^B}(\Phi(\lambda_{F}),\lambda_{\theta})
		-\frac{\partial\hat{\psi}}{\partial\xi^B}(\Phi(\bar{F}),\bar{\theta})\right\ra\nonumber\\
		&+\partial_{\alpha}\left(\frac{\partial\hat{\psi}}{\partial\xi^B}(\Phi(\bar{F}),\bar{\theta})\right)\left(\phv-\phbv\right)\nonumber\\
		&-\phb\partial_{\alpha}\!\left(\frac{\partial\hat{\psi}}{\partial\xi^B}(\Phi(\bar{F}),\bar{\theta})\right)\la\bn,(\lvi-\bar{v}_i)\ra\nonumber\\
		&+\partial_{\alpha}\bar{v}_i\left\la\bn,\frac{\partial\hat{\psi}}{\partial\xi^B}(\Phi(\lambda_{F}),\lambda_{\theta})\phl
		-\frac{\partial\hat{\psi}}{\partial\xi^B}(\Phi(\bar{F}),\bar{\theta})\phb\right\ra\nonumber\\
		&-\bar{\theta}\left\la\bn,\frac{r}{\lt}-\frac{\bar{r}}{\bar{\theta}}\right\ra+\la\bn,r-\bar{r}\ra
		\end{align}
	\end{small}
	employing (\ref{sys2})$_1$ and (\ref{null-Lan}). Here, we use the quantites
	\begin{small}
		\begin{align}
		\begin{split}
		\label{partial.eta.rel.mv}
		\left\la\bn,\hat{\eta}(\Phi(\lf),\lt|\Phi(\bar{F}),\bar{\theta})\right\ra
		&:=\Bigg\langle\bn,\hat{\eta}(\Phi(\lf),\lt)-\hat{\eta}(\Phi(\bar{F}),\bar{\theta})\\
		&\!\!\!\!\!\!\!\!\!\!\!\!\!\!\!\!\!\!\!\!
		-\frac{\partial \hat{\eta}}{\partial\xi^B}(\Phi(\bar{F}),\bar{\theta})(\Phi^B(\lf)-\Phi^B(\bar{F}))
		-\frac{\partial\hat{\eta}}{\partial\theta}(\Phi(\bar{F}),\bar{\theta})(\lt-\bar{\theta})\Bigg\rangle\;,
		\end{split}
		\end{align}
		and
		\begin{align}
		\begin{split}
		\label{partial.g.rel.mv}
		\left\la\bn,\frac{\partial\hat{\psi}}{\partial \xi^B}(\Phi(\lf),\lt|\Phi(\bar{F}),\bar{\theta})\right\ra
		&:=\Bigg\langle\bn,\frac{\partial\hat{\psi}}{\partial\xi^B}(\Phi(\lambda_{F}),\lambda_{\theta})
		-\frac{\partial\hat{\psi}}{\partial\xi^B}(\Phi(\bar{F}),\bar{\theta})\\
		&\!\!\!\!\!\!\!\!\!\!\!\!\!\!\!\!\!\!\!\!!\!\!\!\!\!\!\!
		-\frac{\partial^2 \hat{\psi}}{\partial \xi^B\xi^A}(\Phi(\bar{F}),\bar{\theta})(\Phi^B(\lf)-\Phi^B(\bar{F}))
		-\frac{\partial^2 \hat{\psi}}{\partial\xi^B\partial\theta}(\Phi(\bar{F}),\bar{\theta})(\lt-\bar{\theta})\Bigg\rangle.
		\end{split}
		\end{align}
	\end{small}
	Next, we rewrite the terms
	\begin{small}
		\begin{align}
		\label{entr.eq.3.mv}
		&-\phb\partial_{\alpha}\bar{v}_i\left\la\bn,\frac{\partial\hat{\psi}}{\partial\xi^B}(\Phi(\lambda_{F}),\lambda_{\theta})
		-\frac{\partial\hat{\psi}}{\partial\xi^B}(\Phi(\bar{F}),\bar{\theta})\right\ra\nonumber\\
		&+\partial_{\alpha}\left(\frac{\partial\hat{\psi}}{\partial\xi^B}(\Phi(\bar{F}),\bar{\theta})\right)\left(\phv-\phbv\right)\nonumber\\
		&-\phb\partial_{\alpha}\!\left(\frac{\partial\hat{\psi}}{\partial\xi^B}(\Phi(\bar{F}),\bar{\theta})\right)\la\bn,(\lvi-\bar{v}_i)\ra\nonumber\\
		&+\partial_{\alpha}\bar{v}_i\left\la\bn,\frac{\partial\hat{\psi}}{\partial\xi^B}(\Phi(\lambda_{F}),\lambda_{\theta})\phl
		-\frac{\partial\hat{\psi}}{\partial\xi^B}(\Phi(\bar{F}),\bar{\theta})\phb\right\ra\nonumber\\
		&\;=\partial_{\alpha}\bar{v}_i\left\la\bn,\left(\frac{\partial\hat{\psi}}{\partial\xi^B}(\Phi(\lambda_{F}),\lambda_{\theta})
		-\frac{\partial\hat{\psi}}{\partial\xi^B}(\Phi(\bar{F}),\bar{\theta})\right)\left(\phl-\phb\right)\right\ra\nonumber\\
		&\quad\quad\;\;\;+\partial_{\alpha}\left(\frac{\partial\hat{\psi}}{\partial\xi^B}(\Phi(\bar{F}),\bar{\theta})\right)\!\left(\ph-\phb\right)\!\la\bn,(\lvi-\bar{v}_i)\ra\nonumber\\
		&\quad\quad\;\;\;+\partial_{\alpha}\left(\frac{\partial\hat{\psi}}{\partial\xi^B}(\Phi(\bar{F}),\bar{\theta})\left(\ph-\phb\right)\bar{v}_i\right)\;,
		\end{align}
	\end{small}
	since there holds $$\ph=\left\la\bn,\phl\right\ra$$ and because of the null-Lagrangian property (\ref{null-Lan}).
	Also, we observe that
	\begin{align}
	\label{entr.eq.4.mv}
	-\frac{\bar{r}}{\bar{\theta}}\left\la\bn,\lt-\bar{\theta}\right\ra
	-\bar{\theta}\left\la\bn,\frac{r}{\lt}-\frac{\bar{r}}{\bar{\theta}}\right\ra+\la\bn,r-\bar{r}\ra
	=\left\la\bn,\left(\frac{r}{\lt}-\frac{\bar{r}}{\bar{\theta}}\right)(\lt-\bar{\theta})\right\ra\;.
	\end{align}
	Finally, if we define the averaged quantity
	\begin{align}
	\begin{split}
	\label{def.I}
	I(\lambda_{U}&|\bar{U})=I(\Phi(\lambda_{F}),\lv,\lt|\Phi(\bar{F}),\bar{v},\bar{\theta})\\
	&:=\hat{\psi}(\Phi(\lf),\lt|\Phi(\bar{F}),\bar{\theta})
	+(\hat{\eta}(\Phi(\lf),\lt)-\hat{\eta}(\Phi(\bar{F}),\bar{\theta})(\lt-\bar{\theta})+\frac{1}{2}|\lv-\bar{v}|^2,
	\end{split}
	\end{align}
	for
		\begin{align*}
		\hat{\psi}(\Phi(\lf),\lt|\Phi(\bar{F}),\bar{\theta})
		:=&\hat{\psi}(\Phi(\lf),\lt)-\hat{\psi}(\Phi(\bar{F}),\bar{\theta})
		-\frac{\partial\hat{\psi}}{\partial\xi^B}(\Phi(\bar{F}),\bar{\theta})(\Phi^B(\lf)-\Phi^B(\bar{F}))\\
		&-\frac{\partial\hat{\psi}}{\partial\theta}(\Phi(\bar{F}),\bar{\theta})(\lt-\bar{\theta}),
		\end{align*}
	and then combine (\ref{entr.eq.1.mv}),(\ref{entr.eq.2.mv}),(\ref{entr.eq.3.mv}) and (\ref{entr.eq.4.mv}), we arrive at the 
	relative entropy inequality
	\begin{small}
		\begin{align}
		\label{rel.en.id.mv}
		\begin{split}
		&\int\varphi(0)[\left\la\bn,I(\lambda_{U_0}|\bar{U_0})\right\ra\:dx]
		+\int_{0}^{T}\int\varphi'(t)\left[\left\la\bn,I(\lambda_{U}|\bar{U})\right\ra\:dx\:dt+\bg(dx\,dt)\right]\\
		&\!\!\!\!\!\!\!\!\!\!\!\!\geq-\int_{0}^{T}\int\varphi(t)\left[-\partial_t\bar{\theta}\left\la\bn,\hat{\eta}(\Phi(\lf),\lt|\Phi(\bar{F}),\bar{\theta})\right\ra
		+\partial_t\Phi^B(\bar{F})\left\la\bn,\frac{\partial\hat{\psi}}{\partial \xi^B}(\Phi(\lf),\lt|\Phi(\bar{F}),\bar{\theta})\right\ra\right.\\
		&\!\!\!\!\!\!\!\!\!\!\left.+\partial_{\alpha}\bar{v}_i\left\la\bn,\left(\frac{\partial\hat{\psi}}{\partial\xi^B}(\Phi(\lambda_{F}),\lambda_{\theta})
		-\frac{\partial\hat{\psi}}{\partial\xi^B}(\Phi(\bar{F}),\bar{\theta})\right)\left(\phl-\phb\right)\right\ra\right.\\
		&\!\!\!\!\!\!\!\!\!\!\left.+\partial_{\alpha}\left(\frac{\partial\hat{\psi}}{\partial\xi^B}(\Phi(\bar{F}),\bar{\theta})\right)\!\left(\ph-\phb\right)\!\la\bn,(\lvi-\bar{v}_i)\ra
		+\left\la\bn,\left(\frac{r}{\lt}-\frac{\bar{r}}{\bar{\theta}}\right)(\lt-\bar{\theta})\right\ra\right]\:dx\,dt.
		\end{split}
		\end{align}
	\end{small}
	We note that the last term in (\ref{entr.eq.3.mv}) vanishes when we substitute into the integral relation
	(\ref{entr.eq.1.mv}).
	
	\section{Uniqueness of smooth solutions in the class of dissipative measure-valued solutions} \label{sec5}
	
	In this section, we state and prove the main theorem on dissipative measure-valued versus strong uniqueness. Before we 
	proceed with the proof, we show some useful estimates on the terms appearing in (\ref{rel.en.id.mv}), which yield the 
	relative entropy as a “metric” measuring the distance between the two solutions. These bounds are obtained by using 
	the convexity of the free energy function in the compact domain and the growth conditions 
	(\ref{gr.con.1})-(\ref{gr.con.4}) placed on the constitutive functions in the original variables, in the unbounded 
	domain.
	\begin{lemma}
		\label{lemma1}
		Assume that $(\bar{F},\bar{v},\bar{\theta})$ are defined in the compact set
		\begin{equation}
		\Gamma_{M,\delta}:=\left\{(\bar{F},\bar{v},\bar{\theta}): |\bar{F}|\leq M, |\bar{v}|\leq M, \;\; 0<\delta\leq\bar{\theta}\leq M\right\}
		\label{defGamma}
		\end{equation}
		for some positive constants $M$ and $\delta$ 
		and let $\hat{\psi}=\hat{e}-\theta\hat{\eta}\in C^2(\mathbb{R}^{19}\times[0,\infty)).$ 
		Assuming the growth conditions (\ref{gr.con.1})-(\ref{gr.con.4}) and $p>3,\:\ell>1,$ then
		there exist $R=R(M,\delta)$ and constants $K_1=K_1(M,\delta,c)>0,\:K_2=K_2(M,\delta,c)>0$ such that
		\begin{small}
			\begin{align}
			\label{bound4}
			\begin{split}
			I(\Phi(F),v,\theta|\Phi(\bar{F}),\bar{v},\bar{\theta})\geq
			\begin{dcases}
			\frac{K_1}{2}(|F|^p+\theta^{\ell}+|v|^2), \qquad\qquad\qquad\quad\quad\quad |F|^p\!+\theta^{\ell}\!+|v|^2>R\\
			K_2(|\Phi(F)-\Phi(\bar{F})|^2\!+\!|\theta-\bar{\theta}|^2\!+\!|v-\bar{v}|^2), \quad |F|^p\!+\theta^{\ell}\!+|v|^2\leq R
			\end{dcases}
			\end{split}
			\end{align}
		\end{small}
		for all $(\bar{F},\bar{v},\bar{\theta})\in\Gamma_{M,\delta}$.
	\end{lemma}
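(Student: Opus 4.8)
The plan is to discard the kinetic contribution, which is already a perfect square, and concentrate on the thermomechanical part. Writing $I = \tfrac12|v-\bar v|^2 + J$ with
\[ J := \hat\psi\big(\Phi(F),\theta\,\big|\,\Phi(\bar F),\bar\theta\big) + \big(\hat\eta(\Phi(F),\theta) - \hat\eta(\Phi(\bar F),\bar\theta)\big)(\theta-\bar\theta), \]
and using $\hat\eta = -\hat\psi_\theta$, the two $\hat\psi_\theta(\Phi(\bar F),\bar\theta)$ terms cancel and $J$ collapses to the compact form
\[ J = \hat\psi(\Phi(F),\theta) - \hat\psi(\Phi(\bar F),\bar\theta) - \frac{\partial\hat\psi}{\partial\xi^B}(\Phi(\bar F),\bar\theta)\big(\Phi^B(F)-\Phi^B(\bar F)\big) - \frac{\partial\hat\psi}{\partial\theta}(\Phi(F),\theta)(\theta-\bar\theta). \]
The term $\tfrac12|v-\bar v|^2$ supplies the $|v-\bar v|^2$ contribution in the lower regime and, via $\tfrac12|v-\bar v|^2 \ge \tfrac14|v|^2 - \tfrac12|\bar v|^2$, the $|v|^2$ contribution in the upper regime, so it suffices to bound $J$.

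First I would establish that $J \ge 0$, with strict inequality off the diagonal. Differentiating the compact form in $\theta$ (with $(\bar F,\bar\theta)$ frozen) gives the clean identity
\[ \frac{\partial J}{\partial\theta} = -\frac{\partial^2\hat\psi}{\partial\theta^2}(\Phi(F),\theta)\,(\theta-\bar\theta), \]
whose sign, by $\hat\psi_{\theta\theta}<0$, is that of $(\theta-\bar\theta)$; hence for fixed $F$ the map $\theta\mapsto J$ is minimised at $\theta=\bar\theta$, where it reduces to the Bregman divergence of $\xi\mapsto\hat\psi(\xi,\bar\theta)$ evaluated at $\xi=\Phi(F)$, which is nonnegative by $\hat\psi_{\xi\xi}>0$ and vanishes only when $\Phi(F)=\Phi(\bar F)$. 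Thus $J>0$ whenever $(\Phi(F),\theta)\neq(\Phi(\bar F),\bar\theta)$.

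For the lower regime $|F|^p+\theta^\ell+|v|^2\le R$ the variables $(\Phi(F),\theta,v)$ range over a compact set, on which $(\bar F,\bar v,\bar\theta)\in\Gamma_{M,\delta}$ also varies compactly. A second-order Taylor expansion of the compact form about $(\Phi(\bar F),\bar\theta)$ produces, after the cross terms cancel against the $\theta$-correction, the positive-definite quadratic
\[ \tfrac12\frac{\partial^2\hat\psi}{\partial\xi^A\partial\xi^B}(\Phi(\bar F),\bar\theta)\,\Delta\xi^A\Delta\xi^B - \tfrac12\frac{\partial^2\hat\psi}{\partial\theta^2}(\Phi(\bar F),\bar\theta)\,(\theta-\bar\theta)^2, \qquad \Delta\xi=\Phi(F)-\Phi(\bar F), \]
both coefficients being uniformly bounded below on the compact set thanks to $\hat\psi_{\xi\xi}>0$ and $-\hat\psi_{\theta\theta}>0$. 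Combining this with the strict positivity of the previous step, the quotient of $J$ by $|\Phi(F)-\Phi(\bar F)|^2+|\theta-\bar\theta|^2$ is continuous and strictly positive on the compact set, hence bounded below by some $K_2>0$; adding $\tfrac12|v-\bar v|^2$ gives the second branch of \eqref{bound4}.

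The upper regime is where the argument is most delicate, and is the step I expect to be the main obstacle. Rewriting the compact form by the constitutive identity $\psi+\theta\eta=e$ turns the two leading growing terms into a single internal energy,
\[ J = e(F,\theta) - \bar\theta\,\eta(F,\theta) - \psi(\bar F,\bar\theta) - \frac{\partial\hat\psi}{\partial\xi^B}(\Phi(\bar F),\bar\theta)\big(\Phi^B(F)-\Phi^B(\bar F)\big). \]
The point is that $e(F,\theta)\ge c(|F|^p+\theta^\ell)-c$ by \eqref{gr.con.1} dominates every remaining term: $\bar\theta\le M$ together with \eqref{gr.con.3} makes $\bar\theta\,\eta(F,\theta)=o(|F|^p+\theta^\ell)$; $\psi(\bar F,\bar\theta)$ and $\tfrac{\partial\hat\psi}{\partial\xi^B}(\Phi(\bar F),\bar\theta)$ are bounded on $\Gamma_{M,\delta}$; and since the minors obey $|\Phi(F)|\le C(1+|F|^3)$, the hypothesis $p>3$ forces $|\Phi(F)-\Phi(\bar F)|=o(|F|^p)$, so the last term is $o(|F|^p)$ as well. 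Hence $J\ge c(|F|^p+\theta^\ell)-o(|F|^p+\theta^\ell)-C$, and choosing $R=R(M,\delta)$ large absorbs the sublinear remainders into a fixed fraction of $|F|^p+\theta^\ell$; splitting into the subcases where $|F|^p+\theta^\ell$ or $|v|^2$ carries the bulk of the energy and using $\tfrac12|v-\bar v|^2\ge\tfrac14|v|^2-C$ then yields the first branch $I\ge\tfrac{K_1}{2}(|F|^p+\theta^\ell+|v|^2)$. The subtlety to watch is precisely this recombination: taken on its own the correction term $\eta(F,\theta)(\theta-\bar\theta)$ appears to grow faster than the energy, and only the identity $\psi+\theta\eta=e$ converts it into a controllable form.
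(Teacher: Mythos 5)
Your proof is correct, and it splits cleanly into a part that coincides with the paper's argument and a part that is genuinely different. In the unbounded regime $|F|^p+\theta^{\ell}+|v|^2>R$ you reproduce the paper's Step 1 almost exactly: the same collapse of $I$, via $\hat{\eta}=-\hat{\psi}_{\theta}$, $\hat{e}=\hat{\psi}+\theta\hat{\eta}$ and \eqref{def.hats}, to $e(F,\theta)-\bar{\theta}\,\eta(F,\theta)-\psi(\bar{F},\bar{\theta})-\frac{\partial\hat{\psi}}{\partial\xi}(\Phi(\bar{F}),\bar{\theta})\cdot(\Phi(F)-\Phi(\bar{F}))+\frac{1}{2}|v-\bar{v}|^2$, followed by \eqref{gr.con.1}, \eqref{gr.con.3}, boundedness of the barred quantities on $\Gamma_{M,\delta}$, the cubic growth of the minors together with $p>3$, and a large choice of $R$; your closing remark that only the identity $\psi+\theta\eta=e$ tames the apparently supercritical term $\eta(F,\theta)(\theta-\bar{\theta})$ is precisely the mechanism the paper exploits. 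Where you diverge is the compact regime $|F|^p+\theta^{\ell}+|v|^2\leq R$. The paper passes to the symmetric variables $V=A(U)=\bigl(\xi,v,\tfrac{1}{2}|v|^2+\hat{e}(\xi,\theta)\bigr)$, writes $I/\bar{\theta}$ as the relative form of the entropy $\tilde{H}(V)$, which is convex in $V$ by the symmetrizable extension of \cite{CGT2017}, bounds it below by $\min_{V^*\in D^*}\{\tilde{H}_{VV}(V^*)\}\,|A(U)-A(\bar{U})|^2$, and converts back through the invertibility of $U\mapsto A(U)$ (the integral bound on $\nabla_V(A^{-1})$). You instead stay in the original $(\xi,\theta)$ variables: the identity $\partial J/\partial\theta=-\hat{\psi}_{\theta\theta}(\Phi(F),\theta)(\theta-\bar{\theta})$ together with Bregman positivity at $\theta=\bar{\theta}$ gives $J>0$ off the diagonal, and your Taylor expansion is correct — expanding the correction $-\hat{\psi}_{\theta}(\Phi(F),\theta)(\theta-\bar{\theta})$ to first order exactly cancels the mixed $\hat{\psi}_{\xi\theta}$ terms and flips the sign of the $\hat{\psi}_{\theta\theta}$ term, leaving the block-diagonal quadratic $\tfrac{1}{2}\hat{\psi}_{\xi\xi}\Delta\xi\cdot\Delta\xi-\tfrac{1}{2}\hat{\psi}_{\theta\theta}(\Delta\theta)^2$, which is the structural fact that substitutes for convexity of $\tilde{H}$. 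Your route buys self-containedness: it uses only $\hat{\psi}\in C^2$ and the standing conditions \eqref{cond.redgibbs}, with no appeal to the augmented system or the map $A$ and its inverse. The paper's route buys reuse of machinery already established for the symmetrizable extension, with the lower bound organized directly in the quantity $|A(U)-A(\bar{U})|$ that recurs throughout its relative-entropy framework.

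Two points to tighten in your write-up, neither fatal. First, the quotient of $J$ by $|\Phi(F)-\Phi(\bar{F})|^2+|\theta-\bar{\theta}|^2$ is undefined on the diagonal, so the compactness step should be phrased as the standard two-regime argument: the Taylor bound, made uniform over the compact parameter set by uniform continuity of the second derivatives, covers a neighbourhood $|\Delta\xi|+|\Delta\theta|<\rho$ of the diagonal, while strict positivity of $J$ plus compactness covers the complement; your sketch contains both ingredients but compresses them into one sentence. Second, in this regime $\theta$ only satisfies $0<\theta\leq R^{1/\ell}$, so the positivity constants must be taken over the closure including $\theta=0$; this is exactly where the hypothesis $\hat{\psi}\in C^2(\mathbb{R}^{19}\times[0,\infty))$, regularity up to zero temperature, is used — the paper's set $D$ has the same feature, and its $\min_{D^*}\tilde{H}_{VV}$ relies on it in the same way.
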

	\begin{proof}
		Let $(\bar{F},\bar{v},\bar{\theta})\in\Gamma_{M,\delta}$ and choose $r=r(M):=M^p+M^{\ell}+M^2$
		for which $\Gamma_{M,\delta}\subset B_r=\{(F,v,\theta):\; |F|^p+\theta^{\ell}+|v|^2\leq r\}$. 
		Taking under consideration (\ref{eta&e}), we can write $I(\Phi(F),v,\theta|\Phi(\bar{F}),\bar{v},\bar{\theta})$ in the form
		\begin{align*}
		I&(\Phi(F),v,\theta|\Phi(\bar{F}),\bar{v},\bar{\theta})\\
		&=\hat{e}(\Phi(F),\theta)-\hat{\psi}(\Phi(\bar{F}),\bar{\theta})-\frac{\partial\hat{\psi}}{\partial\xi}(\Phi(\bar{F}),\bar{\theta})\cdot(\Phi(F)-\Phi(\bar{F}))
		-\bar{\theta}\hat{\eta}(\Phi(F),\theta)+\frac{1}{2}|v-\bar{v}|^2\\
		&=e(F,\theta)-\psi(\bar{F},\bar{\theta})-\frac{\partial\hat{\psi}}{\partial\xi}(\Phi(\bar{F}),\bar{\theta})\cdot(\Phi(F)-\Phi(\bar{F}))
		-\bar{\theta}\eta(F,\theta)+\frac{1}{2}|v-\bar{v}|^2\:.
		\end{align*}
		Using (\ref{gr.con.1}) and (\ref{gr.con.3}) we have
		\begin{align*}
		I(\Phi(F),v,\theta|\Phi(\bar{F}),\bar{v},\bar{\theta})
		\geq c(|F|^p+\theta^{\ell}) -c_1 -c_2|F|^3 -c_3|\eta(F,\theta)| +\frac{1}{2}|v|^2 -c_4|v|.
		\end{align*}
		Selecting now $R$ sufficiently large such that $R>r(M)+1$ and for $|F|^p+\theta^{\ell}+|v|^2>R$ we have
		\begin{align*}
		I(\Phi(F),v,\theta|\Phi(\bar{F}),\bar{v},\bar{\theta})
		&\geq \min\left\{c,\frac{1}{2}\right\} (|F|^p+\theta^{\ell}+|v|^2)-c_3|\eta(F,\theta)|-c_5\\
		&\geq \frac{1}{2}\min\left\{c,\frac{1}{2}\right\} (|F|^p+\theta^{\ell}+|v|^2)
		\end{align*}
		and (\ref{bound4}) is established within the region $|F|^p+\theta^{\ell}+|v|^2>R$.
		
		In the complementary region $|F|^p+\theta^{\ell}+|v|^2\leq R,$ observe that $(F,\mathrm{cof}F,\det F,v,\theta)$ takes values in the set
		\begin{small}
			\begin{equation*}
			D:=\left\{(\Phi(F),v,\theta): |F|\leq R^{1/p}, |\mathrm{cof}F|\leq CR^{2/p}, |\det F|\leq CR^{3/p}, |v|\leq R^{1/2}, 0<\theta\leq R^{1/\ell}\right\},
			\end{equation*}
		\end{small}
		for some constant $C.$
		We use the convexity of the entropy $\tilde{H}(V)$ in the symmetric variables 
		$V:=A(U)=\left(\xi,v,\frac{|v|^2}{2}+\hat{e}(\xi,\theta)\right)^T:$ 
		\begin{align*}
		\frac{1}{\bar{\theta}}I(\xi,v,\theta|\bar{\xi},\bar{v},\bar{\theta})
		&=\tilde{H}(A(U)|A(\bar{U}))\\
		&=\tilde{H}(A(U))-\tilde{H}(A(\bar{U}))-\tilde{H}_V(A(\bar{U}))(A(U)-A(\bar{U}))\\
		&\geq\min_{V^*\in D^*}\{\tilde{H}_{VV} (V^*)\}  |A(U)-A(\bar{U})|^2,
		\end{align*}
		since $\tilde{H}(V)$ is convex in $V$ and $D^*$ is the compact domain determined by the map $V=A(U)$ and the
		set $D$ defined above. Moreover, using the invertibility at the map $U\mapsto A(U)$
		\begin{align*}
		|U-\bar{U}|&=\left|\int_0^1\frac{d}{d\tau}[A^{-1}(\tau A(U)+(1-\tau)A(\bar{U}))]\:d\tau\right|\\
		&\leq\left|\int_0^1\nabla_V(A^{-1})(\tau A(U)+(1-\tau)A(\bar{U}))\:d\tau\right|\:|A(U)-A(\bar{U})|\\
		&\leq C' |A(U)-A(\bar{U})|,
		\end{align*}
		where
		$$C'=\sup_{\substack{U\in B_R \\ \bar{U}\in\Gamma_{M,\delta}}}\left|\int_0^1\nabla_V(A^{-1})(\tau A(U)+(1-\tau)A(\bar{U}))\:d\tau\right|\:|A(U)-A(\bar{U})|<\infty.$$
		Therefore
		\begin{align*}
		I(\Phi(F),v,\theta|\Phi(\bar{F}),\bar{v},\bar{\theta})\geq\frac{K_2}{C'}|U-\bar{U}|^2,
		\end{align*}
		for $K_2:=\displaystyle\delta\min\{\tilde{H}_{VV}(  V^*)\}>0$ and the proof is complete.
	\end{proof}
	\begin{lemma}
		\label{lemma2}
		Under the assumptions of Lemma \ref{lemma1} and the additional growth hypothesis 
		\begin{align}
		\label{gr.con.5}
		\left|\frac{\partial\hat{\psi}}{\partial\xi}(\xi,\theta)\right|\leq c\:|\hat{\psi}(\xi,\theta)|,\qquad \forall\,\xi,\,\theta
		\end{align}
		for some positive constant $c,$ the following bounds hold true:
		\begin{enumerate}[label=(\roman*)]
			\item
			There exist constants $C_1,C_2,C_3,C_4>0$ such that
			\begin{small}
				\begin{equation}
				\label{bound6}
				\left|\left(\frac{\partial\Phi^B}{\partial F_{i\alpha}}(F)
				-\frac{\partial\Phi^B}{\partial F_{i\alpha}}(\bar{F})\right)
				\left(\frac{\partial\hat{\psi}}{\partial\xi^B}(\Phi(F),\theta)
				-\frac{\partial\hat{\psi}}{\partial\xi^B}(\Phi(\bar{F}),\bar{\theta})\right)\right|
				\leq C_1 I(\Phi(F),v,\theta|\Phi(\bar{F}),\bar{v},\bar{\theta})\;,
				\end{equation}
			\end{small}
			\begin{equation}
			\label{bound7}
			\left|\frac{\partial\hat{\psi}}{\partial\xi}(\Phi(F),\theta|\Phi(\bar{F}),\bar{\theta})\right|\leq C_2 I(\Phi(F),v,\theta|\Phi(\bar{F}),\bar{v},\bar{\theta})\;,
			\end{equation}
			\begin{equation}
			\label{bound5}
			|\hat{\eta}(\Phi(F),\theta|\Phi(\bar{F}),\bar{\theta})|\leq C_3 I(\Phi(F),v,\theta|\Phi(\bar{F}),\bar{v},\bar{\theta})\;,
			\end{equation}
			and 
			\begin{equation}
			\label{bound8}
			\left|\left(\frac{\partial\Phi^B}{\partial F_{i\alpha}}(F)
			-\frac{\partial\Phi^B}{\partial F_{i\alpha}}(\bar{F})\right)(v_i-\bar{v}_i)\right|
			\leq C_4 I(\Phi(F),v,\theta|\Phi(\bar{F}),\bar{v},\bar{\theta})
			\end{equation}
			for all $(\bar{F},\bar{v},\bar{\theta})\in\Gamma_{M,\delta}$.
			\item
			There exist constants $K_1^{\prime},\:K_2^{\prime}$ and $R>0$ sufficiently large such that
			\begin{small}
				\begin{align}
				\label{bound9}
				\begin{split}
				I(\Phi(F),v,\theta|\Phi(\bar{F}),\bar{v},\bar{\theta})\geq
				\begin{dcases}
				\frac{K_1^{\prime}}{4}(|F-\bar{F}|^p+|\theta-\bar{\theta}|^{\ell}+|v-\bar{v}|^2),\quad\quad\quad |F|^p\!+\theta^{\ell}\!+|v|^2>R\\
				K_2^{\prime}(|\Phi(F)-\Phi(\bar{F})|^2\!+\!|\theta-\bar{\theta}|^2\!+\!|v-\bar{v}|^2), \quad |F|^p\!+\theta^{\ell}\!+|v|^2\leq R
				\end{dcases}
				\end{split}
				\end{align}
			\end{small}
			for all $(\bar{F},\bar{v},\bar{\theta})\in\Gamma_{M,\delta}$.
		\end{enumerate}
	\end{lemma}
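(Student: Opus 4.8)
The plan is to work in the two regimes already isolated in Lemma \ref{lemma1}, the compact region $\{|F|^p+\theta^\ell+|v|^2\le R\}$ and the far field $\{|F|^p+\theta^\ell+|v|^2> R\}$, enlarging $R$ if necessary. Part (ii) is the quickest. In the compact region the second branch of \eqref{bound9} is literally the compact-region estimate \eqref{bound4}, so one takes $K_2'=K_2$. In the far field \eqref{bound4} gives $I\ge \tfrac{K_1}{2}(|F|^p+\theta^\ell+|v|^2)$, and since $(\bar F,\bar v,\bar\theta)\in\Gamma_{M,\delta}$ is bounded by $M$, the elementary inequalities $|F-\bar F|^p\le 2^{p-1}(|F|^p+M^p)$, $|\theta-\bar\theta|^\ell\le 2^{\ell-1}(\theta^\ell+M^\ell)$ and $|v-\bar v|^2\le 2(|v|^2+M^2)$ yield $|F-\bar F|^p+|\theta-\bar\theta|^\ell+|v-\bar v|^2\le C_*(|F|^p+\theta^\ell+|v|^2)$ once $R$ exceeds $r(M)=M^p+M^\ell+M^2$, because then the additive constants are absorbed by the energy. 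Combining gives the first branch of \eqref{bound9} with $K_1':=2K_1/C_*$.

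For part (i) in the compact region, every left-hand side is either a first-order Bregman-type remainder (as in \eqref{partial.eta.rel.mv}, \eqref{partial.g.rel.mv}) or a product of two differences. On the relevant compact set all constitutive functions together with the needed derivatives are bounded, so Taylor's theorem controls each remainder by $C(|\Phi(F)-\Phi(\bar F)|^2+|\theta-\bar\theta|^2)$, while for the products \eqref{bound6} and \eqref{bound8} the Lipschitz bounds $|\tfrac{\partial\Phi^B}{\partial F_{i\alpha}}(F)-\tfrac{\partial\Phi^B}{\partial F_{i\alpha}}(\bar F)|\le C|\Phi(F)-\Phi(\bar F)|$ and $|\partial_\xi\hat\psi(\Phi(F),\theta)-\partial_\xi\hat\psi(\Phi(\bar F),\bar\theta)|\le C(|\Phi(F)-\Phi(\bar F)|+|\theta-\bar\theta|)$ give the same quadratic majorant after one Young inequality. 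Since the compact branch of \eqref{bound4} reads $I\ge K_2(|\Phi(F)-\Phi(\bar F)|^2+|\theta-\bar\theta|^2+|v-\bar v|^2)$, all four estimates follow, with constants depending on $M,\delta,c$.

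The far field is where the work lies, and it is governed by the fact that there $I\gtrsim |F|^p+\theta^\ell+|v|^2$ by \eqref{bound4}, so it suffices to majorise each left-hand side by a constant times the energy. For \eqref{bound5} the recession hypothesis \eqref{gr.con.3} makes $|\hat\eta(\Phi(F),\theta)|$ grow strictly slower than $|F|^p+\theta^\ell$, while the subtracted linear terms are $O(|\Phi(F)-\Phi(\bar F)|+|\theta-\bar\theta|)=O(|F|^3+\theta)$, which is sub-energy once $p>3$, $\ell>1$; enlarging $R$ makes the ratio to the energy as small as one wishes. For \eqref{bound7} the leading term $\partial_\xi\hat\psi(\Phi(F),\theta)$ is controlled directly by the energy through the new hypothesis \eqref{gr.con.5} combined with \eqref{gr.con.2}, and the subtracted terms are again sub-energy because $p>3$.

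The genuinely delicate estimate is \eqref{bound6}, where the naive termwise bound gives $O(|F|^2)\cdot O(\text{energy})$, which is far too large. The resolution is to expand the product into four pieces and recognise, via \eqref{Sigma}, that $\tfrac{\partial\Phi^B}{\partial F_{i\alpha}}(F)\,\partial_{\xi^B}\hat\psi(\Phi(F),\theta)=\Sigma_{i\alpha}(F,\theta)$ and likewise at the barred state: the first is $o(|F|^p+\theta^\ell)$ by \eqref{gr.con.4} and the second is bounded, while the two cross terms are respectively a bounded factor $\partial_\xi\hat\psi(\Phi(\bar F),\bar\theta)$ times $\tfrac{\partial\Phi^B}{\partial F_{i\alpha}}(F)=O(|F|^2)\le O(|F|^p)$, and a bounded factor $\tfrac{\partial\Phi^B}{\partial F_{i\alpha}}(\bar F)$ times $\partial_\xi\hat\psi(\Phi(F),\theta)=O(\text{energy})$. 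Each piece is therefore $O(\text{energy})$ and \eqref{bound6} follows. Finally \eqref{bound8} uses $|\tfrac{\partial\Phi^B}{\partial F_{i\alpha}}(F)-\tfrac{\partial\Phi^B}{\partial F_{i\alpha}}(\bar F)|=O(|F|^2)$ and $|v-\bar v|=O(|v|+M)$, so Young's inequality gives $|F|^2|v|\le \tfrac12|F|^4+\tfrac12|v|^2$, dominated by $|F|^p+|v|^2$ precisely because $p\ge 4$; this is the one place forcing the exponent restriction. I expect \eqref{bound6} to be the main obstacle, since it is the only bound for which the obvious estimate fails and one must exploit the stress structure $\Sigma=\partial_F\Phi\cdot\partial_\xi\hat\psi$ to recover energy-rate growth.
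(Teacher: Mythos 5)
Your proposal is correct and follows essentially the same route as the paper's proof: the same two-region splitting driven by \eqref{bound4}, the same compactness/Hessian bounds on the bounded set $D$ for the Bregman remainders and products, the same four-term decomposition of \eqref{bound6} via the stress identity \eqref{Sigma} combined with \eqref{gr.con.4}, \eqref{gr.con.5} and \eqref{gr.con.2} in the far field, Young's inequality for \eqref{bound8}, and absorption of the barred state (bounded by $M$) into the energy for \eqref{bound9}. The only deviations are cosmetic: you replace the paper's limiting argument in its Step 5 by explicit convexity inequalities, and your remark that \eqref{bound8} forces $p\ge 4$ is in fact slightly sharper than the paper's stated ``$p>3$'' at that step, and consistent with the standing hypothesis $p\ge 4$ used in the main theorems.
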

	\begin{proof}
		We divide the proof into 5 steps.
		
		\textbf{Step $1.$} To prove (\ref{bound6}), we use (\ref{Sigma}) to obtain
		\begin{align*}
		&\left|\left(\frac{\partial\Phi^B}{\partial F_{i\alpha}}(F)
		-\frac{\partial\Phi^B}{\partial F_{i\alpha}}(\bar{F})\right)
		\left(\frac{\partial\hat{\psi}}{\partial\xi^B}(\Phi(F),\theta)
		-\frac{\partial\hat{\psi}}{\partial\xi^B}(\Phi(\bar{F}),\bar{\theta})\right)\right|\\
		&=\frac{\partial\hat{\psi}}{\partial\xi^B}(\Phi(F),\theta)\frac{\partial\Phi^B}{\partial F_{i\alpha}}(F)
		-\frac{\partial\hat{\psi}}{\partial\xi^B}(\Phi(\bar{F}),\bar{\theta})\frac{\partial\Phi^B}{\partial F_{i\alpha}}(F)
		-\frac{\partial\hat{\psi}}{\partial\xi^B}(\Phi(F),\theta)\frac{\partial\Phi^B}{\partial F_{i\alpha}}(\bar{F})\\
		&\qquad+\frac{\partial\hat{\psi}}{\partial\xi^B}(\Phi(\bar{F}),\bar{\theta})\frac{\partial\Phi^B}{\partial F_{i\alpha}}(\bar{F})\\
		&=\Sigma(F,\theta)-\frac{\partial\hat{\psi}}{\partial\xi^B}(\Phi(\bar{F}),\bar{\theta})\frac{\partial\Phi^B}{\partial F_{i\alpha}}(F)
		-\frac{\partial\hat{\psi}}{\partial\xi^B}(\Phi(F),\theta)\frac{\partial\Phi^B}{\partial F_{i\alpha}}(\bar{F})+\Sigma(\bar{F},\bar{\theta}).
		\end{align*}
		For $|F|^p+\theta^{\ell}+|v|^2>R$ and $(\bar{F},\bar{v},\bar{\theta})\in \Gamma_{M,\delta},$
		using (\ref{gr.con.4}), (\ref{gr.con.5}), (\ref{gr.con.2}) and Young's inequality we have
		\begin{align*}
		&\left|\left(\frac{\partial\Phi^B}{\partial F_{i\alpha}}(F)
		-\frac{\partial\Phi^B}{\partial F_{i\alpha}}(\bar{F})\right)
		\left(\frac{\partial\hat{\psi}}{\partial\xi^B}(\Phi(F),\theta)
		-\frac{\partial\hat{\psi}}{\partial\xi^B}(\Phi(\bar{F}),\bar{\theta})\right)\right|\\
		&\qquad\qquad\qquad\leq |\Sigma(F,\theta)|+c_1\left|\frac{\partial\Phi^B}{\partial F}(F)\right|+c_2\left|\frac{\partial\hat{\psi}}{\partial\xi^B}(\Phi(F),\theta)\right|+c_3\\
		&\qquad\qquad\qquad\leq c_4|\psi(F,\theta)|+c_5(|F|^2+|F|+1)+c_6|\hat{\psi}(\Phi(F),\theta)|+c_3\\
		&\qquad\qquad\qquad\leq c_7(|F|^p+\theta^{\ell})+c_8.
		\end{align*}
		Selecting now $R$ large enough, so that $c_8<c(|F|^p\!+\theta^{\ell}\!+|v|^2)$ for $p>3,\:\ell>1,$ we conclude that
		\begin{align*}
		\left|\left(\frac{\partial\Phi^B}{\partial F_{i\alpha}}(F)
		-\frac{\partial\Phi^B}{\partial F_{i\alpha}}(\bar{F})\right)
		\left(\frac{\partial\hat{\psi}}{\partial\xi^B}(\Phi(F),\theta)
		-\frac{\partial\hat{\psi}}{\partial\xi^B}(\Phi(\bar{F}),\bar{\theta})\right)\right|
		\leq C_1' I(\Phi(F),v,\theta|\Phi(\bar{F}),\bar{v},\bar{\theta})
		\end{align*}
		by Lemma \ref{lemma1}. In the region $|F|^p+|v|^2+\theta^{\ell}\leq R$, we have that $(\Phi(F),v,\theta),
		(\Phi(\bar{F}),\bar{v},\bar{\theta})\in D$ so that 
		\begin{align*}
		&\left|\left(\frac{\partial\Phi^B}{\partial F_{i\alpha}}(F)
		-\frac{\partial\Phi^B}{\partial F_{i\alpha}}(\bar{F})\right)
		\left(\frac{\partial\hat{\psi}}{\partial\xi^B}(\Phi(F),\theta)
		-\frac{\partial\hat{\psi}}{\partial\xi^B}(\Phi(\bar{F}),\bar{\theta})\right)\right|\\
		&\qquad\qquad\qquad\leq \max_{D}\left|\nabla^2_{\xi}  
		\hat\psi(\xi,\theta)
		\right| \left|\Phi(F)-\Phi(\bar{F})\right| \left|\frac{\partial\Phi}{\partial F}(F)-\frac{\partial\Phi}{\partial F}(\bar{F})\right|
		\\
		&\qquad\qquad\qquad\leq c_1 \left|\Phi(F)-\Phi(\bar{F})\right|^2\\
		&\qquad\qquad\qquad\leq C_1'' I(\Phi(F),v,\theta|\Phi(\bar{F}),\bar{v},\bar{\theta})
		\end{align*}
		using again Lemma \ref{lemma1}. Choosing now $C_1=\max\{C_1',C_1''\},$ estimate (\ref{bound6}) follows.
		
		\textbf{Step $2.$} Using Young's inequality and (\ref{gr.con.5}), it follows
		\begin{align*}
		\left|\frac{\partial \hat{\psi}}{\partial \xi}(\Phi(F),\theta|\Phi(\bar{F}),\bar{\theta})\right|
		&\leq \left|\frac{\partial \hat{\psi}}{\partial \xi}(\Phi(F),\theta)\right|+c_1+c_2|\Phi(F)|+c_3|\theta|\\
		&\leq c_4 |\hat{\psi}(\Phi(F),\theta)|+c_2(|F|^3+|F|^2+|F|)+c_3|\theta|+c_1\\
		&\leq c_5(|F|^p+\theta^{\ell})+c_6\;.
		\end{align*}
		Choosing again $R$ large enough, such that $R>r(M)+1$ there holds
		\begin{align*}
		\left|\frac{\partial \hat{\psi}}{\partial \xi}(\Phi(F),\theta|\Phi(\bar{F}),\bar{\theta})\right|
		&\leq c_7(|F|^p+\theta^{\ell}+|v|^2)\\
		&\leq C_2' I(\Phi(F),v,\theta|\Phi(\bar{F}),\bar{v},\bar{\theta})
		\end{align*}
		by (\ref{bound4}) and for $|F|^p+\theta^{\ell}+|v|^2>R$ and $(\bar{F},\bar{v},\bar{\theta})\in\Gamma_{M,\delta}.$
		In the complementary region $|F|^p+\theta^{\ell}+|v|^2\leq R,$ there holds $(\Phi(F),v,\theta),(\Phi(\bar{F}),\bar{v},\bar{\theta})\in D$, therefore
		\begin{align*}
		\left|\frac{\partial \hat{\psi}}{\partial \xi}(\Phi(F),\theta|\Phi(\bar{F}),\bar{\theta})\right|
		&\leq \max_{D}\left|\nabla_{(\xi,\theta)}^2\hat{\psi}(\xi,\theta)\right| (|\Phi(F)-\Phi(\bar{F})|^2+|\theta-\bar{\theta}|^2)\\
		&\leq C_2'' I(\Phi(F),v,\theta|\Phi(\bar{F}),\bar{v},\bar{\theta})
		\end{align*}
		again by (\ref{bound4}). Choosing $C_2=\max\{C_2',C_2''\},$ the proof of (\ref{bound7}) is complete.
		
		\textbf{Step $3.$} We proceed in a similar manner as in Step $2.$ to prove (\ref{bound5}).
		First we study the region $|F|^p+|v|^2+\theta^{\ell} >R$ and we use growth assumption (\ref{gr.con.3}) and relation (\ref{eta&e}) to get
		\begin{small}
			\begin{align*}
			&\lim_{|F|^p+\theta^{\ell}\to\infty}\frac{|\hat{\eta}(\Phi(F),\theta|\Phi(\bar{F}),\bar{\theta})|}{|F|^p+\theta^{\ell}}
			=\lim_{|F|^p+\theta^{\ell}\to\infty}\frac{|\hat{\eta}(\Phi(F),\theta)|}{|F|^p+\theta^{\ell}}
			=\lim_{|F|^p+\theta^{\ell}\to\infty}\frac{|\eta(F,\theta)|}{|F|^p+\theta^{\ell}}=0.
			\end{align*}
		\end{small}
		So immediately we deduce $$|\hat{\eta}(\Phi(F),\theta|\Phi(\bar{F}),\bar{\theta})|\leq C_3' I(\Phi(F),v,\theta|\Phi(\bar{F}),\bar{v},\bar{\theta}),$$
		for $R$ large enough. On the complementary region $|F|^p+|v|^2+\theta^{\ell}\leq R,$ 
		\begin{align*}
		|\hat{\eta}(\Phi(F),\theta|\Phi(\bar{F}),\bar{\theta})|
		&\leq\max_{D}|\nabla_{(\xi,\theta)}^2\hat{\eta}(\Phi(F),\theta)|(|\Phi(F)-\Phi(\bar{F})|^2+|\theta-\bar{\theta}|^2)\\
		&\leq C_3'' I(\Phi(F),v,\theta|\Phi(\bar{F}),\bar{v},\bar{\theta})\;,
		\end{align*}
		by~\eqref{bound4}. Choosing $C_3=\max\{C_3',C_3''\},$ the proof of (\ref{bound5}) is complete.
		
		\textbf{Step $4.$} Similarly, for (\ref{bound8}), when $|F|^p+\theta^{\ell} +|v|^2>R$ and $(\bar{F},\bar{v},\bar{\theta})\in\Gamma_{M,\delta}$, we have
		\begin{align*}
		\left|\left(\ph-\phb\right)(v_i-\bar{v}_i)\right|
		\leq c_1|\Phi(F)|^2+ c_2|v|^2 +c_3
		\end{align*}
		and choosing appropriately the radius $R,$ proceeding as before, it follows 
		\begin{align*}
		\left|\left(\ph-\phb\right)(v_i-\bar{v}_i)\right|
		&\leq c_4(|F|^p+\theta^{\ell}+|v|^2)\\
		&\leq C_4' I(\Phi(F),v,\theta|\Phi(\bar{F}),\bar{v},\bar{\theta}),
		\end{align*}
		where we use again~\eqref{bound4} and $p>3.$
		Then, for $|F|^p+\theta^{\ell}+|v|^2\leq R$ and for all $(\Phi(F),v,\theta),$ $(\Phi(\bar{F}),\bar{v},\bar{\theta})\in D$, we also get
		\begin{align*}
		\left|\left(\ph-\phb\right)(v_i-\bar{v}_i)\right|&\leq\
		\tfrac{1}{2} \left| \frac{\partial\Phi^B}{\partial F}(F)-\frac{\partial\Phi^B}{\partial F}(\bar{F}) \right|^2+ \tfrac{1}{2} |v-\bar{v}|^2\\
		&\leq c_1 (|\Phi(F)-\Phi(\bar{F})|^2+|v-\bar{v}|^2)\\
		&\leq C_4'' I(\Phi(F),v,\theta|\Phi(\bar{F}),\bar{v},\bar{\theta})
		\end{align*} 
		by~\eqref{bound4}. Choosing $C_4=\max\{C_4',C_4''\},$ estimate (\ref{bound8}) follows.
		
		\textbf{Step $5.$}  Since $(\bar{F},\bar{v},\bar{\theta})\in\Gamma_{M,\delta}\subset B_r$ -for sufficiently large $R$- there holds
		\begin{align*}
		|F-\bar{F}|^p&+|\theta-\bar{\theta}|^{\ell}+|v-\bar{v}|^2\leq(|F|+M)^p+(\theta+M)^{\ell}+(|v|+M)^2\;
		\end{align*}
		and
		\begin{align*}
		\lim_{|F|^p+\theta^{\ell}+|v|^2\to\infty}\frac{(|F|+M)^p+(\theta+M)^{\ell}+(|v|+M)^2}{|F|^p+\theta^{\ell}+|v|^2}=1\;.
		\end{align*}
		Thus, we may select $R$ such that
		\begin{align*}
		|F-\bar{F}|^p+|\theta-\bar{\theta}|^{\ell}+|v-\bar{v}|^2&\leq2(|F|^p+\theta^{\ell}+|v|^2+1)\\
		&\leq C(|F|^p+\theta^{\ell}+|v|^2)
		\end{align*}
		when $|F|^p+\theta^{\ell}+|v|^2 \geq R.$ Thus~(\ref{bound9}) follows from~\eqref{bound4}.
		This concludes the proof.
	\end{proof}
	
	We now consider a dissipative measure-valued solution for polyconvex thermoelasticity as defined in 
	Definition \ref{def.mv.F}. Using the averaged relative entropy inequality (\ref{rel.en.id.mv}), we prove that in the 
	presence of a classical solution, given that the associated Young measure is initially a Dirac mass, the \emph{dissipative measure-valued} solution
	must coincide with the \emph{classical} one.
	\begin{theorem}
		\label{Uniqueness_thm.mv}
		Let $\bar{U}$ be a Lipschitz bounded solution of (\ref{sys2}),(\ref{entropy.prod}) with initial data $\bar{U}^0$ and $(\bn,\bg,U)$ be a dissipative measure-valued solution  satisfying (\ref{mv.sol.Fvt}),(\ref{mv.sol.Fvt.energy}), with initial data $U^0,$ both under the constitutive assumptions (\ref{con.rel.aug}) and such that $r(x,t)=\bar{r}(x,t)=0$.
		Suppose that $\nabla_{\xi}^2\hat{\psi}(\Phi(F),\theta)>0$ and $\hat{\eta}_{\theta}(\Phi(F),\theta)>0$ and the 
		growth conditions (\ref{gr.con.1}), (\ref{gr.con.2}), (\ref{gr.con.3}), (\ref{gr.con.4}), (\ref{gr.con.5})
		hold for $p\geq 4,$ and $\ell> 1.$
		If $\bar{U}\in\Gamma_{M,\delta},$ for some positive constants $M,\delta$ and $\bar{U}\in W^{1,\infty}(Q_T)$, whenever $\bn_{(0,x)}=\delta_{\bar{U}^0}(x)$ and $\bg_0=0$ 
		we have that $\bn=\delta_{\bar{U}}$ and $U=\bar U$ a.e. on $Q_T.$
	\end{theorem}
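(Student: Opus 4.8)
The plan is to run the standard relative-entropy/Gronwall argument, but executed directly on the averaged inequality (\ref{rel.en.id.mv}) with the concentration measure $\bg$ retained on the dissipative side. First I would test (\ref{rel.en.id.mv}) with a function $\varphi=\varphi_\tau\in C^1_c[0,T)$ approximating the indicator $\mathbf 1_{[0,\tau]}$, i.e. $\varphi_\tau\equiv 1$ on $[0,\tau]$ and decreasing to $0$ on $[\tau,\tau+\epsilon]$, so that $\varphi_\tau'\le 0$ is supported near $t=\tau$. Since $r=\bar r=0$, the radiative contribution (\ref{entr.eq.4.mv}) drops from the integrand $K$, and the initial hypotheses $\bn_{(0,x)}=\delta_{\bar U^0(x)}$, $\bg_0=0$ give $\langle\bn,I(\lambda_{U_0}|\bar U_0)\rangle=I(\bar U^0|\bar U^0)=0$, killing the $\varphi(0)$ term. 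Writing
\[
\mathcal E(\tau):=\int_{\mathbb T^3}\langle\bn,I(\lambda_{U}|\bar U)\rangle(x,\tau)\,dx,
\]
and letting $\epsilon\to0$ at a Lebesgue point $\tau$, the signs $\bg\ge0$, $\varphi_\tau'\le0$ place the accumulated concentration mass on the favourable side, yielding for a.e.\ $\tau\in[0,T)$
\[
\mathcal E(\tau)+\bg\big([0,\tau]\times\mathbb T^3\big)\ \le\ \int_0^\tau\!\!\int_{\mathbb T^3}K(x,t)\,dx\,dt .
\]

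The core step is the pointwise bound $|K|\le C\,\langle\bn,I(\lambda_U|\bar U)\rangle$. Because $\bar U\in W^{1,\infty}(Q_T)$ with $\hat\psi\in C^2$, every coefficient generated by the strong solution, namely $\partial_t\bar\theta$, $\partial_t\Phi^B(\bar F)$, $\partial_\alpha\bar v_i$ and $\partial_\alpha\!\big(\tfrac{\partial\hat\psi}{\partial\xi^B}(\Phi(\bar F),\bar\theta)\big)$, is uniformly bounded, and $\bar U$ ranges in the compact set $\Gamma_{M,\delta}$ where Lemmas \ref{lemma1} and \ref{lemma2} are valid. I would then estimate $K$ term by term: the $\hat\eta$-relative term via (\ref{bound5}), the $\tfrac{\partial\hat\psi}{\partial\xi^B}(\cdots|\cdots)$ term via (\ref{bound7}), the stress-difference product $\big(\tfrac{\partial\hat\psi}{\partial\xi^B}(\Phi(\lf),\lt)-\tfrac{\partial\hat\psi}{\partial\xi^B}(\Phi(\bar F),\bar\theta)\big)(\phl-\phb)$ via (\ref{bound6}), and the velocity--minor cross term via (\ref{bound8}). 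For the last term the factor $\ph-\phb$ is evaluated at the barycentre $F=\langle\bn,\lf\rangle$; here I would invoke the null-Lagrangian weak-continuity identity $\ph=\langle\bn,\phl\rangle$ (Lemma \ref{lemma.weak.minors}) to rewrite it as the average $\langle\bn,\phl-\phb\rangle$, and then split the resulting product of averages by Cauchy--Schwarz and Young's inequality under $\bn$, controlling each factor by $\langle\bn,I\rangle$ as in Step 4 of Lemma \ref{lemma2} together with (\ref{bound9}).

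Granting this bound, the two displays combine into
\[
\mathcal E(\tau)+\bg\big([0,\tau]\times\mathbb T^3\big)\ \le\ C\int_0^\tau\mathcal E(s)\,ds ,
\]
so dropping the nonnegative concentration mass and applying Gronwall's lemma with $\mathcal E(0)=0$ forces $\mathcal E(\tau)=0$ for every $\tau\in[0,T)$; reinserting this shows $\bg\equiv0$ as well. Finally, $I(\Phi(\lf),\lv,\lt|\Phi(\bar F),\bar v,\bar\theta)\ge0$ and, by the strict convexity hypotheses $\nabla^2_\xi\hat\psi>0$, $\hat\eta_\theta>0$ (quantitatively by (\ref{bound9})), it vanishes only when $(\Phi(\lf),\lv,\lt)=(\Phi(\bar F),\bar v,\bar\theta)$, whose first nine components read $\lf=\bar F$. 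Hence $\langle\bn,I\rangle=0$ a.e.\ forces $\bn_{(x,t)}=\delta_{(\bar F,\bar v,\bar\theta)(x,t)}$, and taking barycentres gives $U=\langle\bn,\lambda_U\rangle=\bar U$ a.e.\ on $Q_T$.

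I expect the main obstacle to be the estimate $|K|\le C\langle\bn,I\rangle$, and specifically the reconciliation of the terms evaluated at the averaged gradient $F=\langle\bn,\lf\rangle$ (such as $\ph-\phb$) with the quantities genuinely averaged against $\bn$. This is exactly the point at which working in the original variables $(\Phi(F),v,\theta)$ rather than the extended ones is decisive: Lemmas \ref{lemma1}--\ref{lemma2} deliver the required control under only the mild growth conditions (\ref{gr.con.1})--(\ref{gr.con.5}), while the weak-continuity identity $\ph=\langle\bn,\phl\rangle$ is what lets the deterministic and averaged factors be matched. A secondary technical nuisance is the passage from the $\varphi$-integrated inequality to the pointwise-in-$\tau$ Gronwall inequality, which must be justified at Lebesgue points of $\tau\mapsto\mathcal E(\tau)$ and for the time-marginal of $\bg$.
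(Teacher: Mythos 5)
Your proposal is correct and follows essentially the paper's own proof: the same indicator-approximating test functions $\varphi_n\to\varphi_\tau$ in (\ref{rel.en.id.mv}), the same term-by-term control of the right-hand side by Lemmas \ref{lemma1} and \ref{lemma2} (estimates (\ref{bound6}), (\ref{bound7}), (\ref{bound5}), (\ref{bound8})) together with the identity $\ph=\la\bn,\phl\ra$, discarding $\bg\geq 0$, and closing with Gronwall and (\ref{bound9}). One harmless slip: with your $\varphi_\tau$ the concentration term enters as $\tfrac{1}{\varepsilon}\,\bg\bigl([\tau,\tau+\varepsilon]\times\mathbb{T}^3\bigr)$, not the cumulative mass $\bg\bigl([0,\tau]\times\mathbb{T}^3\bigr)$, but since it is nonnegative and dropped before the limit nothing is affected, and your bonus conclusion $\bg\equiv 0$ (which the theorem does not claim) still follows from the local form once $\mathcal{E}\equiv 0$.
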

	\begin{proof} 
		Let $\{\varphi_n\}$ be a sequence of monotone decreasing functions such that $\varphi_n\geq 0,$ for all $n\in\mathbb{N},$ converging as $n \to \infty$ to the Lipschitz function
		\begin{align*}
		\varphi(\tau)=\begin{dcases}
		1 & 0\leq\tau\leq t\\
		\frac{t-\tau}{\varepsilon}+1 & t\leq\tau\leq t+\varepsilon\\
		0 & \tau\geq t+\varepsilon
		\end{dcases}
		\end{align*}
		for some $\varepsilon>0.$ Writing the relative entropy inequality~\eqref{rel.en.id.mv} for $r(x,t)=\bar{r}(x,t)=0,$ tested against the functions $\varphi_n$ we have 
		\begin{small}
			\begin{align}
			\label{rel.entr.wk-n.mv}
			\begin{split}
			\int&\varphi_n(0)\left\la\bn,I(\lambda_{U_0}|\bar{U_0})\right\ra\:dx
			+\int_{0}^{t}\int\varphi'_n(\tau)\left[\left\la\bn,I(\lambda_{U}|\bar{U})\right\ra\:dx\:d\tau+\bg(dx\,d\tau)\right]\\
			&\geq-\int_{0}^{t}\int\varphi_n(\tau)\left[-\partial_t\bar{\theta}\left\la\bn,\hat{\eta}(\Phi(\lf),\lt|\Phi(\bar{F}),\bar{\theta})\right\ra
			+\partial_t\Phi^B(\bar{F})\left\la\bn,\frac{\partial\hat{\psi}}{\partial \xi^B}(\Phi(\lf),\lt|\Phi(\bar{F}),\bar{\theta})\right\ra\right.\\
			&\quad
			\left.+\partial_{\alpha}\bar{v}_i\left\la\bn,\left(\frac{\partial\hat{\psi}}{\partial\xi^B}(\Phi(\lambda_{F}),\lambda_{\theta})
			-\frac{\partial\hat{\psi}}{\partial\xi^B}(\Phi(\bar{F}),\bar{\theta})\right)\left(\phl-\phb\right)\right\ra\right.\\
			&\quad
			\left.+\partial_{\alpha}\left(\frac{\partial\hat{\psi}}{\partial\xi^B}(\Phi(\bar{F}),\bar{\theta})\right)\!
			\left(\ph-\phb\right)\!\la\bn,(\lvi-\bar{v}_i)\ra\right]\:dx\,d\tau.
			\end{split}
			\end{align}
		\end{small} 
		Passing to the limit as $n\to \infty$ we get
		\begin{small}
			\begin{align*}
			\int &\left\la\bn,I(\Phi(\lf),\lv,\lt| \Phi(\bar{F}),\bar{v},\bar{\theta})\right\ra(x,0)\:dx\\
			&\quad
			-\frac{1}{\varepsilon}\int_{t}^{t+\varepsilon}
			\int \left[\left\la\bn,I(\Phi(\lf),\lv,\lt| \Phi(\bar{F}),\bar{v},\bar{\theta})\right\ra\:dx\:d\tau+
			\bg(dxd\tau)\right]  \\ 
			&\geq- \int_0^{t+\varepsilon}\!\!\!\int \!\left[-\partial_t\bar{\theta}\left\la\bn,\hat{\eta}(\Phi(\lf),\lt|\Phi(\bar{F}),\bar{\theta})\right\ra
			+\partial_t\Phi^B(\bar{F})\left\la\bn,\frac{\partial\hat{\psi}}{\partial \xi^B}(\Phi(\lf),\lt|\Phi(\bar{F}),\bar{\theta})\right\ra\right.\\
			&\quad
			\left.+\partial_{\alpha}\bar{v}_i\left\la\bn,\left(\frac{\partial\hat{\psi}}{\partial\xi^B}(\Phi(\lambda_{F}),\lambda_{\theta})
			-\frac{\partial\hat{\psi}}{\partial\xi^B}(\Phi(\bar{F}),\bar{\theta})\right)\left(\phl-\phb\right)\right\ra\right.\\
			&\quad
			\left.+\partial_{\alpha}\left(\frac{\partial\hat{\psi}}{\partial\xi^B}(\Phi(\bar{F}),\bar{\theta})\right)\!
			\left(\ph-\phb\right)\!\la\bn,(\lvi-\bar{v}_i)\ra\right]\:dx\,d\tau.
			\end{align*}
		\end{small}
		Passing now to the limit as $\varepsilon \to 0^{+}$ and using the fact that $\bg\geq 0$ in combination with
		the estimates (\ref{bound6}), (\ref{bound7}),~(\ref{bound5}) and~\eqref{bound8}, we arrive at
		\begin{align*}
		\int  \left\la\bn,I(\Phi(\lf),\lv,\lt| \Phi(\bar{F}),\bar{v},\bar{\theta})\right\ra\:dx\:dt 
		\leq C  &\int_0^t\int \left\la\bn,I(\Phi(\lf),\lv,\lt| \Phi(\bar{F}) ,\bar{v},\bar{\theta})\right\ra\:dx\:d\tau\\
		&+\int \left\la\bn,I(\Phi(\lf),\lv,\lt| \Phi(\bar{F}),\bar{v},\bar{\theta})\right\ra(x,0)\:dx
		\end{align*}
		for $t\in (0,T).$ Note that the constant $C$ depends only on the smooth bounded solution $\bar{U}.$ 
		Then Gronwall's inequality implies 
		\begin{align*} 
		\int  \left\la\bn,I(\Phi(\lf),\lv,\lt| \Phi(\bar{F}),\bar{v},\bar{\theta})\right\ra\:dx\:dt 
		\leq C_1 e^{C_2t}\int \left\la\bn,I(\Phi(\lf),\lv,\lt| \Phi(\bar{F})\bar{v},\bar{\theta})\right\ra(x,0)\:dx
		\end{align*}
		and the proof is complete by (\ref{bound9}).
	\end{proof}
	
	An extension of Theorem \ref{Uniqueness_thm.mv} holds in case we assume $r(x,t)=\bar{r}(x,t)\neq 0.$
	For this purpose, we need the additional assumption
	\begin{align}
	\label{supp.nu}
	\mathrm{supp}\:\bn\subset\:\mathbb{R}^{19}\times\mathbb{R}^3\times[\underline{\delta},\infty).
	\end{align} 
	to control the terms that arise from the radiative heat supply in~\eqref{rel.en.id.mv}.
	We first prove the following lemma:
	\begin{lemma}
		\label{lemma3}
		Suppose that $r(x,t)=\bar{r}(x,t)\in L^{\infty}(Q_T)$ and that
		\begin{align*}
		\mathrm{supp}\:\bn\subset\:\mathbb{R}^{19}\times\mathbb{R}^3\times[\underline{\delta},\infty),
		\end{align*}
		for some small positive constant $\underline{\delta}.$ Then there exists a constant $C_5>0$ such that
		\begin{equation}
		\label{bound10}
		\left|\left\la\bn,\left(\frac{r}{\lt}-\frac{\bar{r}}{\bar{\theta}}\right)(\lt-\bar{\theta})\right\ra\right|
		\leq C_5 \left\la\bn,I(\Phi(F),v,\theta|\Phi(\bar{F}),\bar{v},\bar{\theta})\right\ra
		\end{equation}
		for all $(\bar{F},\bar{v},\bar{\theta})\in\Gamma_{M,\delta}$.
	\end{lemma}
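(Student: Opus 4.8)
The plan is to reduce the claimed estimate to a pointwise bound (in the Young-measure variable) and then integrate against $\bn$. Since $r=\bar r$ by hypothesis, the integrand collapses: using $\frac{1}{\lt}-\frac{1}{\bar\theta}=-\frac{\lt-\bar\theta}{\lt\bar\theta}$ one gets
\[
\left(\frac{r}{\lt}-\frac{\bar r}{\bar\theta}\right)(\lt-\bar\theta)
= r\left(\frac{1}{\lt}-\frac{1}{\bar\theta}\right)(\lt-\bar\theta)
= -\,\frac{r}{\bar\theta}\,\frac{(\lt-\bar\theta)^2}{\lt}\,,
\]
so that, with $|r|\le\|r\|_{L^\infty(Q_T)}$ and $\bar\theta\ge\delta$ on $\Gamma_{M,\delta}$,
\[
\left|\left(\frac{r}{\lt}-\frac{\bar r}{\bar\theta}\right)(\lt-\bar\theta)\right|
\le \frac{\|r\|_{L^\infty}}{\delta}\,\frac{(\lt-\bar\theta)^2}{\lt}\,.
\]
It thus suffices to dominate $\frac{(\lt-\bar\theta)^2}{\lt}$ by $C\,I(\Phi(\lf),\lv,\lt|\Phi(\bar F),\bar v,\bar\theta)$ for $\bn$-a.e.\ $(\lf,\lv,\lt)$, after which $\left|\la\bn,\cdot\ra\right|\le\la\bn,|\cdot|\ra$ yields \eqref{bound10}. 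The support hypothesis $\mathrm{supp}\,\bn\subset\mathbb{R}^{19}\times\mathbb{R}^3\times[\underline\delta,\infty)$ forces $\lt\ge\underline\delta>0$, which is precisely what renders the factor $\tfrac{1}{\lt}$ harmless.

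Following the pattern of Lemmas \ref{lemma1} and \ref{lemma2}, I would split according to the two regions in \eqref{bound4}. On the bounded region $|\lf|^p+\lt^\ell+|\lv|^2\le R$ the lower bound $\lt\ge\underline\delta$ gives $\frac{(\lt-\bar\theta)^2}{\lt}\le\frac{1}{\underline\delta}(\lt-\bar\theta)^2$, while \eqref{bound4} supplies $I\ge K_2(\lt-\bar\theta)^2$; hence this contribution is bounded by $\frac{1}{\underline\delta K_2}\,I$.

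The crux is the unbounded region $|\lf|^p+\lt^\ell+|\lv|^2>R$, and here the division by $\lt$ is essential. Expanding,
\[
\frac{(\lt-\bar\theta)^2}{\lt}=\lt-2\bar\theta+\frac{\bar\theta^2}{\lt}\le \lt+\frac{M^2}{\underline\delta}\,,
\]
using $\bar\theta\le M$ and $\lt\ge\underline\delta$: the $\tfrac{1}{\lt}$ factor tames the quadratic so that only the linear growth $\lt$ survives. Since $\ell>1$ gives $\lt\le\lt^\ell+1$, and since on this region the inequality $|\lf|^p+\lt^\ell+|\lv|^2>R$ lets me absorb the additive constants into $C(|\lf|^p+\lt^\ell+|\lv|^2)$, the lower bound $I\ge\frac{K_1}{2}(|\lf|^p+\lt^\ell+|\lv|^2)$ from \eqref{bound4} closes the estimate. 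Taking $C_5$ to be the maximum of the two regional constants times $\|r\|_{L^\infty}/\delta$ and integrating against $\bn$ finishes the argument. The only genuine difficulty, and the reason assumption \eqref{supp.nu} is imposed, is exactly this low-$\ell$ regime: when $1<\ell<2$ a naive $(\lt-\bar\theta)^2$ bound cannot be controlled by $\lt^\ell$, and it is the cancellation coming from the factor $r/\lt$ together with the uniform lower bound $\underline\delta$ on the temperature that makes the estimate go through.
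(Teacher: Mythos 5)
Your proposal is correct and follows essentially the same route as the paper's proof: the same algebraic reduction of the integrand to $\|r\|_{L^\infty}\,(\lt-\bar\theta)^2/(\lt\bar\theta)$, the same splitting into the two regions of \eqref{bound4}, the same use of the support hypothesis \eqref{supp.nu} to tame $\bar\theta^2/\lt$ (equivalently $\bar\theta/\lt$) on the unbounded region and $1/(\lt\bar\theta)$ on the bounded one, and Lemma \ref{lemma1} to close both cases. The only (harmless, and in fact slightly cleaner) organizational difference is that you carry out the estimate pointwise in the Young-measure variables before integrating against $\bn$, whereas the paper performs part of the large-region estimate after averaging.
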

	\begin{proof} 
		Assume first that $|F|^p +\theta^{\ell}+|v|^2>R.$ Then
		\begin{align*}
		\left|\left\la\bn,\left(\frac{r}{\lt}-\frac{\bar{r}}{\bar{\theta}}\right)(\lt-\bar{\theta})\right\ra\right|
		&=\left|\int\left(\frac{\bar{r}}{\lt}-\frac{\bar{r}}{\bar{\theta}}\right)(\lt-\bar{\theta})\:d\bn\right|\\
		&\leq \|\bar{r}\|_{L^{\infty}}\left|\int \frac{(\lt-\bar{\theta})^2}{\lt\bar{\theta}}\:d\bn\right|\\
		&=\|\bar{r}\|_{L^{\infty}} \left|\int\left(\frac{\lt}{\bar{\theta}}-2+\frac{\bar{\theta}}{\lt}\right)\:d\bn\right|\\
		&\leq \|\bar{r}\|_{L^{\infty}} \left(\left|\left\la\bn,\frac{\lt}{\bar{\theta}}\right\ra\right|+\left|\left\la\bn,\frac{\bar{\theta}}{\lt}\right\ra\right|+c_1\right)\\
		&\leq c_2\left|\left\la\bn,\lt\right\ra\right|+C_3\left|\left\la\bn,1\right\ra\right|+c_4\\
		&\leq c_5 (|\theta|+1)\;.
		\end{align*}
		Choosing $R$ sufficiently large, we get for $\ell>1$
		\begin{align*}
		\left|\left\la\bn,\left(\frac{r}{\lt}-\frac{\bar{r}}{\bar{\theta}}\right)(\lt-\bar{\theta})\right\ra\right|
		&\leq c_6 (|F|^p+|\theta|^{\ell}+|v|^2)\\
		&\leq C_5'\left\la\bn,I(\Phi(F),v,\theta|\Phi(\bar{F}),\bar{v},\bar{\theta})\right\ra,
		\end{align*}
		where, the last inequality holds because of Lemma \ref{lemma1} and the constant $C_5'$ depends on $\bar{r},\:\delta,\:M$ and $\underline{\delta}.$
		
		Now, similarly, if $|F|^p +\theta^{\ell}+|v|^2\leq R,$ we have
		\begin{align*}
		\left|\left\la\bn,\left(\frac{r}{\lt}-\frac{\bar{r}}{\bar{\theta}}\right)(\lt-\bar{\theta})\right\ra\right|&\leq \|\bar{r}\|_{L^{\infty}}
		\int \frac{|\lt-\bar{\theta}|^2}{|\lt\bar{\theta}|}\:d\bn\\
		&\leq C_1 \int |\lt-\bar{\theta}|^2\:d\bn\\
		&\leq C_5''\left\la\bn,I(\Phi(F),v,\theta|\Phi(\bar{F}),\bar{v},\bar{\theta})\right\ra\;,
		\end{align*}
		again by estimate (\ref{bound4}). By choosing $$C_5=\max\{C_5'(\bar{r},\delta,M,\underline{\delta}),C_5''(\bar{r},\delta,\underline{\delta})\},$$
		the proof is complete.
	\end{proof}
	
	Then Theorem \ref{Uniqueness_thm.mv} extends to :
	\begin{theorem}
		\label{Uniqueness_thm.mv.with.r}
		Let $\bar{U}$ be a Lipschitz bounded solution of (\ref{sys2}),(\ref{entropy.prod}) with initial data $\bar{U}^0$ and $(\bn,\bg,U)$ be a dissipative measure-valued solution  satisfying (\ref{mv.sol.Fvt}),(\ref{mv.sol.Fvt.energy}), with initial data $U^0,$ both under the constitutive assumptions (\ref{con.rel.aug}) and such that $r(x,t)=\bar{r}(x,t)$. Assume also that there exists a small constant $\underline{\delta}>0$ such that (\ref{supp.nu}) holds true. Suppose that $\nabla_{\xi}^2\hat{\psi}(\Phi(F),\theta)>0$ and $\hat{\eta}_{\theta}(\Phi(F),\theta)>0$ and the growth
		conditions (\ref{gr.con.1}), (\ref{gr.con.2}), (\ref{gr.con.3}), (\ref{gr.con.4}), (\ref{gr.con.5})
		hold for $p\geq 4,$ and $\ell> 1.$
		If $\bar{U}\in\Gamma_{M,\delta},$ for some positive constants $M,\delta$ and $\bar{U}\in W^{1,\infty}(Q_T)$, whenever 
		$\bn_{(0,x)}=\delta_{\bar{U}^0}(x),$ and  $\bg_0=0$ 
		we have that $\bn=\delta_{\bar{U}}$ and $U=\bar U$ a.e. on $Q_T.$
	\end{theorem}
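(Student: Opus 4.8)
The plan is to follow almost verbatim the argument of Theorem~\ref{Uniqueness_thm.mv}, the only new feature being that the radiative contribution in the relative entropy inequality~(\ref{rel.en.id.mv}) no longer vanishes, since now $r=\bar r$ rather than $r=\bar r=0$. Concretely, I would write~(\ref{rel.en.id.mv}) retaining the term $\left\la\bn,\left(\frac{r}{\lt}-\frac{\bar r}{\bar\theta}\right)(\lt-\bar\theta)\right\ra$ on the right-hand side, and test it against the same sequence $\{\varphi_n\}$ of monotone decreasing nonnegative functions converging to the truncated Lipschitz plateau $\varphi$ used in the proof of Theorem~\ref{Uniqueness_thm.mv}.

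First I would pass to the limit $n\to\infty$ and then $\varepsilon\to 0^{+}$, exactly as before, invoking $\bg\geq 0$ to discard the nonnegative concentration contribution on the left-hand side. This produces an inequality in which the averaged relative entropy $\int\la\bn,I\ra(x,t)\,dx$ is bounded by its value at $t=0$ plus a time integral of all the flux-type terms already present in~(\ref{rel.en.id.mv}).

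The mechanical terms are controlled precisely as in Theorem~\ref{Uniqueness_thm.mv}: using that $\bar U\in W^{1,\infty}(Q_T)$ takes values in the compact set $\Gamma_{M,\delta}$, so that $\partial_t\Phi^B(\bar F)$, $\partial_t\bar\theta$ and $\partial_\alpha\bar v_i$ are bounded, and applying the estimates~(\ref{bound6}), (\ref{bound7}), (\ref{bound5}) and~(\ref{bound8}) of Lemma~\ref{lemma2}, each such term is dominated by $C\la\bn,I\ra$ with $C$ depending only on $\bar U$. The single genuinely new contribution is the radiative term, and here the assumption~(\ref{supp.nu}) enters decisively: it guarantees $\lt\geq\underline\delta>0$ on the support of the Young measure, so that Lemma~\ref{lemma3} applies and bounds this term by $C_5\la\bn,I\ra$ as well. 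Collecting everything yields
$$\int\la\bn,I\ra(x,t)\,dx\leq \int\la\bn,I\ra(x,0)\,dx+C\int_0^t\int\la\bn,I\ra\,dx\,d\tau,$$
with $C=C(\bar U,\bar r,M,\delta,\underline\delta)$.

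Finally, Gronwall's inequality together with the hypotheses $\bn_{(0,x)}=\delta_{\bar U^0}(x)$ and $\bg_0=0$ --- which force the initial averaged relative entropy to vanish --- gives $\int\la\bn,I\ra(x,t)\,dx=0$ for a.e.\ $t\in(0,T)$. The lower bound~(\ref{bound9}) then delivers $\bn=\delta_{\bar U}$ and $U=\bar U$ a.e.\ on $Q_T$. I do not expect any genuine obstacle beyond the radiative term; its control is entirely encapsulated in Lemma~\ref{lemma3}, whose only non-trivial input is the lower temperature bound supplied by~(\ref{supp.nu}). The technical point to verify is precisely that this bound is what keeps $r/\lt$ integrable against the measure and prevents a singularity as $\lt\to 0^{+}$, which is the reason~(\ref{supp.nu}) must be imposed.
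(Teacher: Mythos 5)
Your proposal is correct and follows essentially the same route as the paper's proof: retaining the radiative term in the relative entropy inequality~(\ref{rel.en.id.mv}), passing to the limits $n\to\infty$ and $\varepsilon\to 0^{+}$ with $\bg\geq 0$, invoking the estimates (\ref{bound6}), (\ref{bound7}), (\ref{bound5}), (\ref{bound8}) together with (\ref{bound10}) from Lemma~\ref{lemma3}, and closing with Gronwall and~(\ref{bound9}). You also correctly identify the one genuinely new ingredient, namely that the support assumption~(\ref{supp.nu}) supplies the lower temperature bound $\lt\geq\underline{\delta}$ needed for Lemma~\ref{lemma3}, which is exactly the point the paper emphasizes.
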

	\begin{proof} 
		The proof is a simple variant of the one for Theorem \ref{Uniqueness_thm.mv}. Assuming the sequence
		$\{\varphi_n\}$ as before, the relative entropy inequality~\eqref{rel.en.id.mv} becomes
		\begin{small}
			\begin{align*}
			\begin{split}
			\int&\varphi_n(0)\left\la\bn,I(\lambda_{U_0}|\bar{U_0})\right\ra\:dx
			+\int_{0}^{t}\int\varphi'_n(\tau)\left[\left\la\bn,I(\lambda_{U}|\bar{U})\right\ra\:dx\:d\tau+\bg(dx\,d\tau)\right]\\
			&\geq-\int_{0}^{t}\int\varphi_n(\tau)\left[-\partial_t\bar{\theta}\left\la\bn,\hat{\eta}(\Phi(\lf),\lt|\Phi(\bar{F}),\bar{\theta})\right\ra
			+\partial_t\Phi^B(\bar{F})\left\la\bn,\frac{\partial\hat{\psi}}{\partial \xi^B}(\Phi(\lf),\lt|\Phi(\bar{F}),\bar{\theta})\right\ra\right.\\
			&\!\!\!\!\!\!\!\!\!\!\left.+\partial_{\alpha}\bar{v}_i\left\la\bn,\left(\frac{\partial\hat{\psi}}{\partial\xi^B}(\Phi(\lambda_{F}),\lambda_{\theta})
			-\frac{\partial\hat{\psi}}{\partial\xi^B}(\Phi(\bar{F}),\bar{\theta})\right)\left(\phl-\phb\right)\right\ra\right.\\
			&\!\!\!\!\!\!\!\!\!\!\left.+\partial_{\alpha}\left(\frac{\partial\hat{\psi}}{\partial\xi^B}(\Phi(\bar{F}),\bar{\theta})\right)\!
			\left(\ph-\phb\right)\!\la\bn,(\lvi-\bar{v}_i)\ra
			+\left\la\bn,\left(\frac{r}{\lt}-\frac{\bar{r}}{\bar{\theta}}\right)(\lt-\bar{\theta})\right\ra\right]\:dx\,d\tau.
			\end{split}
			\end{align*}
		\end{small} 
		Passing to the limit as $n\to \infty$ and then as $\varepsilon \to 0^{+}$ we obtain
		\begin{align*}
		\int  \left\la\bn,I(\Phi(\lf),\lv,\lt|\Phi(\bar{F}),\bar{v},\bar{\theta})\right\ra\:dx\:dt 
		\leq C  &\int_0^t\int \left\la\bn,I(\Phi(\lf),\lv,\lt|\Phi(\bar{F}),\bar{v},\bar{\theta})\right\ra\:dx\:d\tau\\
		&+\int \left\la\bn,I(\Phi(\lf),\lv,\lt| \Phi(\bar{F}),\bar{v},\bar{\theta})\right\ra(x,0)\:dx
		\end{align*}
		for $t\in (0,T).$ Here, we used that $\bg\ge 0$ and the estimates (\ref{bound6}), (\ref{bound7}), (\ref{bound5}), (\ref{bound8}) and (\ref{bound10}), so that constant $C$ depends on the smooth bounded solution $\bar{U}$ and $\underline{\delta}.$ 
		By virtue of Gronwall's inequality and~\eqref{bound9}, we conclude the proof.
	\end{proof}
	
	Let us note that,
	as Lemma \ref{lemma3} indicates, one needs to assume (\ref{supp.nu}) in order to be able to bound from below the averaged temperature $\left\la\bn,\lt\right\ra$ and
	achieve estimate (\ref{bound10}). Though it could be considered as a rather mild assumption, it is interesting that all the estimates in Lemmas \ref{lemma1} and
	\ref{lemma2} that involve the averaged temperature, do not require (\ref{supp.nu}) to hold. This is because the averaged temperature is involved only through the
	constitutive functions $\hat{\psi},\:\hat{e}$ and $\hat{\eta}$ which we assume to be smooth enough, i.e. $\hat{\psi}=\hat{e}-\theta\hat{\eta}\in C^2$, and therefore we avoid
	any loss of smoothness as the temperature approaches zero.
	
	\appendix
	\section{The natural bounds of viscous approximation for polyconvex thermoelasticity} \label{AppA}   
	Since measure-valued solutions usually occur as limits of an approximating problem, consider the system of polyconvex thermoelasticity with Newtonian viscosity and Fourier heat conduction
	\begin{small}
		\begin{align}
		\begin{split}
		\label{poly.sys.mu.k}
		\partial_t \Phi^B(F^{\mu,k})&=\partial_{\alpha}\left(\frac{\partial\Phi^B}{\partial F_{i\alpha}}(F^{\mu,k})v_i^{\mu,k}\right)\\ 
		\partial_t v^{\mu,k}_i
		&=\partial_{\alpha}\left(\frac{\partial\hat{\psi}}{\partial\xi^B}(\Phi(F^{\mu,k}),\theta^{\mu,k})
		\frac{\partial\Phi^B}{F_{i\alpha}}(F^{\mu,k})\right) 
		+\partial_{\alpha}(\mu\partial_{\alpha}v_i^{\mu,k})\\
		\partial_t \left(\frac{1}{2}|v^{\mu,k}|^2+\hat{e}(\Phi(F^{\mu,k}),\theta^{\mu,k})\right)
		&=\partial_{\alpha}\left(\frac{\partial\hat{\psi}}{\partial\xi^B}(\Phi(F^{\mu,k}),\theta^{\mu,k})
		\frac{\partial\Phi^B}{F_{i\alpha}}(F^{\mu,k}) v_i^{\mu,k}\right)+\\
		&\quad\quad\quad\quad\quad\quad\quad\quad\quad\quad
		+\partial_{\alpha}(\mu v_i^{\mu,k}\partial_{\alpha}v_i^{\mu,k}+k\partial_{\alpha}\theta^{\mu,k})+r\\
		\partial_t\hat{\eta}(\Phi(F^{\mu,k}),\theta^{\mu,k})&=\partial_{\alpha}\left(k\frac{\nabla\theta^{\mu,k}}{\theta^{\mu,k}}\right)
		+k\frac{|\nabla\theta^{\mu,k}|^2}{(\theta^{\mu,k})^2}+\mu\frac{|\nabla v^{\mu,k}|^2}{\theta^{\mu,k}}+\frac{r}{\theta^{\mu,k}}
		\end{split}
		\end{align}
	\end{small}
	where
	\begin{equation*}
	\frac{\partial \hat{\psi}}{\partial F_{i\alpha}}(\Phi(F),\theta)=\frac{\partial\psi}{\partial F_{i\alpha}}(\Phi(F),\theta)
	\frac{\partial\psi}{\partial \zeta^{k\gamma}}(\Phi(F),\theta)
	\frac{\partial(\mathrm{cof}F)_{k\gamma}}{\partial F_{i\alpha}}
	+\frac{\partial\psi}{\partial w}(\Phi(F),\theta)\frac{\partial(\det F)}{\partial F_{i\alpha}}.
	\end{equation*}
	Suppose first that the energy radiation $r\equiv 0.$  The viscosity and heat conduction coefficients are assumed to satisfy
	the condition
	\begin{align}
	\begin{split}
	\label{mu.theta.assumptions}
	|\mu(F,\theta)\theta|&<\mu_0|e(F,\theta)|,\\
	|k(F,\theta)|&<k_0|e(F,\theta)|,\\
	\end{split}
	\end{align}
	for some constants $\mu_0,\: k_0>0$,
	we are going to examine how we can obtain measure-valued solutions in the limit as $\mu_0\to0$ and $k_0\to0.$
	We work in a periodic domain in space $Q_T=\mathbb{T}^d\times[0,T),$ for $T\in[0,\infty)$ and $d=3.$
	We impose the growth conditions 
	\begin{equation}
	\label{gr.con.1.F}
	c(|F|^p+\theta^{\ell})-c\leq e(F,\theta)\leq c(|F|^p+\theta^{\ell})+c.
	\end{equation} 
	\begin{equation}
	\label{gr.con.2.F}
	c(|F|^p+\theta^{\ell})-c\leq \psi(F,\theta)\leq c(|F|^p+\theta^{\ell})+c\;,
	\end{equation}
	\begin{equation}
	\label{gr.con.3.F}
	\lim_{|F|^p+\theta^{\ell}\to\infty}\frac{|\partial_{\theta}\psi(F,\theta)|}{|F|^p+\theta^{\ell}}=
	\lim_{|F|^p+\theta^{\ell}\to\infty}\frac{|\eta(F,\theta)|}{|F|^p+\theta^{\ell}}=0\;,
	\end{equation}
	and
	\begin{equation}
	\label{gr.con.4.F}
	\lim_{|F|^p+\theta^{\ell}\to\infty}\frac{|\partial_{F}\psi(F,\theta)|}{|F|^p+\theta^{\ell}}=
	\lim_{|F|^p+\theta^{\ell}\to\infty}\frac{|\Sigma(F,\theta)|}{|F|^p+\theta^{\ell}}=0\;,
	\end{equation}
	for some constant $c>0$ and $p,\ell>1.$ 
	
	Integrating the energy equation (\ref{poly.sys.mu.k})$_3$ in $Q_T$ we get
	\begin{align}\label{energynw.unif.bound}
	\int\left(\frac{1}{2}|v^{\mu,k}|^2+\hat{e}(\Phi(F^{\mu,k}),\theta^{\mu,k})\right)\:dx
	\leq 
	\int\left(\frac{1}{2}|v_0^{\mu,k}|^2+\hat{e}(\Phi(F_0^{\mu,k}),\theta_0^{\mu,k})\right)\:dx
	\leq C_1
	\end{align}
	so that
	\begin{align*}
	\sup_{0<t<T}\int\left(\frac{1}{2}|v^{\mu,k}|^2+\hat{e}(\Phi(F^{\mu,k}),\theta^{\mu,k})\right)\:dx\leq C<\infty.
	\end{align*}
	Therefore (\ref{gr.con.1.F}) implies that
	\begin{align}
	\label{norms.unif.bound}
	\int\left(|F^{\mu,k}|^p+\frac{1}{2}|v^{\mu,k}|^2+|\theta^{\mu,k}|^{\ell}\right)\:dx\leq C<\infty
	\end{align}
	and then the functions $(F^{\mu,k},v^{\mu,k},\theta^{\mu,k})$ are all bounded in the spaces:
	\begin{equation}
	\label{Fvtheta.reg}
	F^{\mu,k}\in L^{\infty}(L^p),\quad v^{\mu,k}\in L^{\infty}(L^2), \quad \theta^{\mu,k}\in L^{\infty}(L^{\ell})
	\end{equation}
	and weakly converging to the averages
	\begin{align*}
	F^{\mu,k}\rightharpoonup\la \bn,\lf\ra, \quad\text{weak-$\ast$ in\:} L^{\infty} (L^p ) \;,\\
	v^{\mu,k}\rightharpoonup\la \bn,\lv\ra, \quad\text{weak-$\ast$ in\:} L^{\infty}( L^2 ) \;,\\
	\theta^{\mu,k}\rightharpoonup\la \bn,\lt\ra, \quad\text{weak-$\ast$ in\:} L^{\infty}(L^{\ell} ) \, .
	\end{align*}
	Integrating now (\ref{poly.sys.mu.k})$_4$ ($r\equiv 0$), in $Q_T$ we obtain
	\begin{align}
	\label{entropy.unif.bound}
	\int\hat{\eta}(\Phi(F^{\mu,k}),\theta^{\mu,k})\:dx-\int\hat{\eta}(\Phi(F^{\mu,k}),\theta^{\mu,k})(x,0)\:dx
	=\int_0^T\!\!\!\!\int\left(k\frac{|\nabla\theta^{\mu,k}|^2}{(\theta^{\mu,k})^2}
	+\mu\frac{|\nabla v^{\mu,k}|^2}{\theta^{\mu,k}}\right)\:dxdt.
	\end{align}
	Then (\ref{gr.con.3.F}) and (\ref{Fvtheta.reg}) immediately imply that 
	$\hat{\eta}(\Phi(F^{\mu,k}),\theta^{\mu,k})\in L^{\infty}(L^1)$ while 
	\begin{align}
	\label{entropy.diss}
	0<\int_0^T\int\left(k\frac{|\nabla\theta^{\mu,k}|^2}{(\theta^{\mu,k})^2}
	+\mu\frac{|\nabla v^{\mu,k}|^2}{\theta^{\mu,k}}\right)\:dxdt\leq C.
	\end{align}
	
	Now, let us consider the first equation in (\ref{poly.sys.mu.k}).
	We employ Lemma \ref{lemma.weak.minors}, in order to pass to the limit in the minors and the identities \eqref{transport.stretching}. It follows that
	(\ref{poly.sys.mu.k})$_1$ holds in the classical weak sense for motions with regularity as in (\ref{y.reg}) and
	\begin{align*}
	(\Phi(F),v,\theta)=(F,\zeta,w,v,\theta)
	\in L^{\infty}(L^p)\times L^{\infty}(L^q)\times L^{\infty}(L^{\rho})\times L^{\infty}(L^2)\times L^{\infty}(L^{\ell})
	\end{align*}
	with $p\ge 4,\:q\ge 2, \rho , \ell>1.$
	
	To pass to the limit in the second equation (\ref{poly.sys.mu.k})$_2,$ 
	we use the Theorem of Ball \cite{MR1036070} on representation via Young measures
	in the $L^p$ setting:
	\begin{lemma} 
		Let $f^{\epsilon}:\bar{Q}_T\to{\mathbb{R}^m}$ be a bounded function in $L^p.$ Then for all $F:\mathbb{R}^m\to\mathbb{R}$ which are continuous and such that $F(f^{\epsilon})$ is $L^1$ weakly precompact,
		there holds (along a subsequence) $$F(f^{\epsilon})\rightharpoonup\la \nu,F\ra, \quad\text{weakly in\:} L^1(Q_T).$$
		
		If $f^{\epsilon}:\bar{Q}_T\to{\mathbb{R}^m}$ is uniformly bounded in $Q_T,$ then for all continuous  $F:\mathbb{R}^m\to\mathbb{R}$ there holds (along a subsequence) $$F(f^{\epsilon})\rightharpoonup\la \nu,F\ra,
		\quad\text{weak-$\ast$ in\:} L^{\infty}(Q_T).$$
	\end{lemma}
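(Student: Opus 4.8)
The plan is to prove this as the fundamental theorem for Young measures of Ball \cite{MR1036070}, in two stages: first I would extract the parametrized family $\nu=\{\nu_{(x,t)}\}$ by a soft compactness argument, and then I would verify the representation formula for the admissible nonlinearities $F$. Throughout I use that $Q_T=\mathbb{T}^d\times[0,T)$ has finite measure.

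For the construction, I would identify each $f^{\epsilon}$ with the Dirac-valued map $(x,t)\mapsto\delta_{f^{\epsilon}(x,t)}$, regarded as an element of $L^{\infty}_w(Q_T;\mathcal{M}(\mathbb{R}^m))$. Since $Q_T$ has finite measure and $C_0(\mathbb{R}^m)$ is separable, this space is the dual of the separable Banach space $L^1(Q_T;C_0(\mathbb{R}^m))$, and each of the maps above has dual norm one. By Banach--Alaoglu together with separability of the predual, a subsequence converges weak-$\ast$ to some $\nu$ with $\nu_{(x,t)}\ge 0$ and total mass at most one for a.e.\ $(x,t)$. To see that $\nu_{(x,t)}$ is in fact a probability measure a.e., I would test against products $\phi(x,t)\,\chi_R(y)$ with $\phi\ge 0$ and $\chi_R\in C_0(\mathbb{R}^m)$ a cutoff equal to $1$ on $B_R$, and use the $L^p$ bound: Chebyshev gives $|\{|f^{\epsilon}|>R\}|\le C R^{-p}$, so $\int\phi\,\chi_R(f^{\epsilon})\ge\int\phi-\|\phi\|_{\infty}C R^{-p}$; passing $\epsilon\to 0$ and then $R\to\infty$ (monotone convergence for $\la\nu,\chi_R\ra\uparrow\la\nu,1\ra$) forces the total mass to equal one. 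This is precisely the no-escape-to-infinity step supplied by the uniform $L^p$ bound.

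For the representation, fix a continuous $F$ with $\{F(f^{\epsilon})\}$ weakly precompact in $L^1$; by the Dunford--Pettis theorem this is equivalent to equi-integrability of $\{F(f^{\epsilon})\}$ on the finite-measure set $Q_T$. I would work with the truncations $F_R:=F\,\chi_R\in C_0(\mathbb{R}^m)$, for which the construction of $\nu$ gives at once $F_R(f^{\epsilon})=\la\delta_{f^{\epsilon}},F_R\ra\rightharpoonup\la\nu,F_R\ra$. The two truncation errors are then controlled uniformly: $\int_{\{|f^{\epsilon}|>R\}}|F(f^{\epsilon})|$ is small uniformly in $\epsilon$ by equi-integrability, while $\la\nu,F_R\ra\to\la\nu,F\ra$ follows from dominated convergence once $\nu$-integrability of $F$ is obtained from Fatou applied to the uniform $L^1$ bound. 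Letting first $\epsilon\to 0$ and then $R\to\infty$ yields $F(f^{\epsilon})\rightharpoonup\la\nu,F\ra$ weakly in $L^1(Q_T)$.

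The second assertion is a specialization: if $f^{\epsilon}$ is uniformly bounded, all values lie in a fixed compact $K$, every continuous $F$ is bounded on $K$, and $\{F(f^{\epsilon})\}$ is bounded in $L^{\infty}$, so the weak $L^1$ convergence above upgrades to weak-$\ast$ convergence in $L^{\infty}(Q_T)$. I expect the main obstacle to be the no-concentration step, namely guaranteeing that $\nu_{(x,t)}$ is a genuine probability measure rather than losing mass to $y=\infty$; the hypothesis that $F(f^{\epsilon})$ be weakly $L^1$-precompact is exactly the structural assumption that removes concentration for the given $F$ and makes the truncation errors vanish uniformly.
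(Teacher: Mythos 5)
The paper gives no proof of this lemma: it is quoted as Ball's fundamental theorem for Young measures \cite{MR1036070}, so the only benchmark is the standard proof in that reference. Your proposal is a correct and complete rendition of exactly that argument --- Dirac embedding into $L^{\infty}_w(Q_T;\mathcal{M}(\mathbb{R}^m))$ plus Banach--Alaoglu with separable predual, tightness of the limit family from the uniform $L^p$ bound via Chebyshev, and the truncation scheme in which Dunford--Pettis equi-integrability controls $\int_{\{|f^{\epsilon}|>R\}}|F(f^{\epsilon})|$ uniformly in $\epsilon$ while Fatou and dominated convergence handle $\la\nu,F_R\ra\to\la\nu,F\ra$, with the uniformly bounded case correctly reduced to the first by compact support of the values --- so it matches the cited source's approach and nothing further is needed.
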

	Note that the assumption $\{F(f^{\epsilon})\}$ is $L^1$ weakly precompact cannot be dropped or just replaced by $L^1$
	boundness, because in such case concentrations might develop. We are going to examine each term separately, in order to
	obtain (along a non-relabeled subsequence)
	\begin{align*}
	\partial_t \left\la\bn,\lvi\right\ra&=
	\partial_{\alpha}\left\la\bn,\frac{\partial\hat{\psi}}{\partial \xi^B}(\Phi(\lf),\lt)
	\frac{\partial\Phi^B}{\partial F_{i\alpha}}(\lf)\right\ra
	\end{align*}
	in the sense of distributions, 
	which means
	\begin{align*}
	\int\left\la\bn,\lvi\right\ra(x,0)\phi(x,0)\:dx
	&+\int_0^T\int\left\la\bn,\lvi\right\ra \partial_t\phi\:dxdt\\
	&=\int_0^T\int\left\la\bn,\frac{\partial\hat{\psi}}{\partial \xi^B}(\Phi(\lf),\lt)
	\frac{\partial\Phi^B}{\partial F_{i\alpha}}(\lf)\right\ra \partial_{\alpha}\phi\:dxdt
	\end{align*}
	for $\phi(x,t)\in C_c^1(Q_T).$ 
	Observe that (\ref{norms.unif.bound}), (\ref{gr.con.4.F}) combined with (\ref{Sigma}) yield that the term
	$$\frac{\partial\hat{\psi}}{\partial\xi^B}(\Phi(F^{\mu,k}),\theta^{\mu,k})\frac{\partial\Phi^B}{F_{i\alpha}}(F^{\mu,k})$$ is representable with respect to the Young measure $\bn_{(x,t)\in\bar{Q}_T}$ so that
	\begin{align*}
	\frac{\partial\hat{\psi}}{\partial F_{i\alpha}}(\Phi(F^{\mu,k}),\theta^{\mu,k})
	\rightharpoonup\left\la\bn,\frac{\partial\hat{\psi}}{\partial F_{i\alpha}}(\Phi(\lf),\lt)\right\ra, \quad\text{weakly in\:} L^1.
	\end{align*}
	Now, we examine the limit of the diffusion term $\mathrm{div}(\mu\nabla v^{\mu,k})$ as $\mu_0\to 0$. First, we write
	\begin{align*}
	\lim_{\mu_0\to 0}\left|\int_0^T\!\!\int\mathrm{div}(\mu\nabla v^{\mu,k})\phi\:dxdt\right|
	&=\lim_{\mu_0\to 0}\int_0^T\!\!\int\left|\mu\nabla v^{\mu,k}\cdot\nabla\phi\right|\:dxdt,
	\end{align*}
	while using Hölder's inequality:
	\begin{align}
	\begin{split}
	\label{term2}
	\lim_{\mu_0\to 0}\int_0^T\!\!\int&\left|\mu\nabla v^{\mu,k}\cdot\nabla\phi\right|\:dxdt\\
	&\leq\lim_{\mu_0\to 0}\left[\left(\int_0^T\!\!\!\int |\mu|\frac{|\nabla v^{\mu,k}|^2}{\theta^{\mu,k}}\:dxdt\right)^{1/2}
	\!\!\!\!\left(\int_0^T\!\!\int |\mu\,\theta^{\mu,k}|\, |\nabla\phi|^2\:dxdt\right)^{1/2}\right]\\
	&\leq\lim_{\mu_0\to 0} C\mu_0=0
	\end{split}
	\end{align}
	for $\phi(x,t)\in C_c^1(Q_T)$, by virtue of (\ref{mu.theta.assumptions})$_1$, (\ref{gr.con.1.F}) and (\ref{entropy.diss}).
	
	Next, we move to the entropy identity (\ref{poly.sys.mu.k})$_4,$ with $r\equiv0.$ In the limit, we aim to have 
	\begin{align*}
	-\int\left\la\bn,\hat{\eta}(\Phi(\lf),\lt)\right\ra(x,0)\phi(x,0)\:dx
	&-\int_0^T\int\left\la\bn,\hat{\eta}(\Phi(\lf),\lt)\right\ra \partial_t\phi\:dxdt\geq 0,
	\end{align*}
	for $\phi(x,t)\in C_c^1(Q_T),\:\phi\geq 0.$ Growth condition (\ref{gr.con.3.F}) combined with 
	(\ref{eta&e})$_1$ allows to use again the Fundamental Lemma on Young measures to get
	\begin{align*}	
	\hat{\eta}(\Phi(F^{\mu,k}),\theta^{\mu,k})\rightharpoonup\left\la\bn,\hat{\eta}(\Phi(\lf),\lt)\right\ra, \quad\text{weakly in\:} L^1,
	\end{align*}
	The diffusion term 
	$\mathrm{div}\left(k\frac{\nabla\theta^{\mu,k}}{\theta^{\mu,k}}\right)$ can be treated exactly as in (\ref{term2}),
	using again Hölder's inequality, (\ref{mu.theta.assumptions})$_2$, (\ref{gr.con.1.F}) and (\ref{entropy.diss}):
	\begin{align}
	\begin{split}
	\label{term3}
	\lim_{k_0\to 0}
	&\left|\int_0^T\!\!\!\int\mathrm{div}\left(k\frac{\nabla\theta^{\mu,k}}{\theta^{\mu,k}}\right)\phi\:dxdt\right|\\
	&=\lim_{k_0\to 0}\left|\int_0^T\!\!\!\int k\frac{\nabla\theta^{\mu,k}}{\theta^{\mu,k}}\cdot\nabla\phi\:dxdt\right|\\
	&\leq\lim_{k_0\to 0}\left[\left(\int_0^T\!\!\!\int |k|\,\frac{|\nabla\theta^{\mu,k}|^2}{(\theta^{\mu,k})^2}\:dxdt\right)^{1/2}
	\!\!\!\!\left(\int_0^T\!\!\!\int |k|\,|\nabla\phi|^2\:dxdt\right)^{1/2}\right]\\
	&\leq\lim_{k_0\to 0} Ck_0=0
	\end{split}
	\end{align}
	%
	where $\phi(x,t)\in C_c^1(Q_T),\:\phi\geq 0.$ On account of (\ref{entropy.diss}), we apply the Banach-Alaoglou 
	Theorem to the sequence $\left|\sqrt{k}\frac{\nabla\theta^{\mu,k}}{\theta^{\mu,k}}\right|^2$  to obtain a 
	sequential weak-$\ast$ limit $\bs_k\in\mathcal{M}^+(\bar{Q}_T)$
	\begin{align*}
	\bs_k(\phi)=\int_0^T\int\phi\:d\bs_k
	=\lim_{k_0\to 0}\int_0^T\int\phi\left|\sqrt{k}\frac{\nabla\theta^{\mu,k}}{\theta^{\mu,k}}\right|^2\:dxdt,
	\quad\forall\:\phi\in C(\bar{Q}_T),\:\phi\geq 0.
	\end{align*}
	Similarly, the term
	\begin{align*}
	\mu\frac{|\nabla v^{\mu,k}|^2}{\theta^{\mu,k}}\rightharpoonup \bs_{\mu},
	\quad\text{weak-$\ast$ in $\mathcal{M}^+(\bar{Q}_T),$}
	\end{align*}
	as $\mu_0\to 0,$ because of (\ref{entropy.diss}).
	In summary, in the limit as $k_0,\mu_0\to 0,$ it holds (along a subsequence)
	\begin{align}
	\label{entropy.without.r}
	\partial_t \left\la\bn,\hat{\eta}(\Phi(\lf),\lt)\right\ra=\bs_{\mu}+\bs_k\geq 0
	\end{align}
	in the sense of distributions.
	
	Next we consider the energy equation (\ref{poly.sys.mu.k})$_3$. 
	The function $$(x,t)\mapsto \big ( \tfrac{1}{2}|v^{\mu,k}|^2 + \hat e (\Phi(F^{\mu,k}), \theta^{\mu,k} ) \big ) dxdt$$ is 
	weakly precompact in the space of nonnegative Radon measures $\mathcal{M}^+(\bar{Q}_T),$ but not 
	weakly precompact in $L^1,$ therefore the Young measure representation fails. To capture the resulting 
	formation of concentrations, we introduce the concentration measure $\bg(dx,dt),$ which is a non-negative Radon 
	measure in $Q_T,$ for a subsequence of $\frac{1}{2}|v^{\mu,k}|^2+\hat{e}(\Phi(F^{\mu,k}),\theta^{\mu,k}),$ according to 
	the analysis in Section 3. As we aim to construct dissipative solutions, in the limit, we consider an integrated
	averaged energy identity, tested against $\varphi=\varphi(t)$ in $C^1_c[0,T],$ that does not depend on the 
	spatial variable; as a result, all the flux terms in~(\ref{poly.sys.mu.k})$_3$ vanish. 
	Having this in mind, the energy equation becomes
	\begin{small}
		\begin{align*}
		\int  \varphi(0) \Big( \la\bn,\frac{1}{2}|\lv|^2+&\hat{e}(\Phi(\lf),\lt)\ra(x,0) \:dx+\bg_0(dx) \Big)\\
		&+\int_0^T \!\!\int \varphi^{\prime}(t)\left(\left\la\bn,\frac{1}{2}|\lv|^2+\hat{e}(\Phi(\lf),\lt)\right\ra\:dx\:dt+\bg(dx\:dt)\right) =0.
		\end{align*}
	\end{small}
	for all $\varphi=\varphi(t)$ in $C^1_c[0,T]$. Altogether, we conclude:
	\begin{align*}
	\partial_t \Phi^B(F)&=\partial_{\alpha}\left(\phv \right) \\ 
	\partial_t \left\la\bn,\lvi\right\ra
	&=\partial_{\alpha}\left\la\bn,\frac{\partial \hat{\psi}}{\partial\xi^B}(\Phi(\lambda_{F}),\lambda_{\theta})\frac{\partial\Phi^B}{\partial F_{i\alpha}}(\lf)\right\ra\\
	\partial_t \left\la\bn,\hat{\eta}(\Phi(\lambda_{F}),\lt)\right\ra & \geq 0
	\end{align*}
	and 
	\begin{small}
		\begin{align*}
		\int  \varphi(0) \Big( \la\bn,\frac{1}{2}|\lv|^2+&\hat{e}(\Phi(\lf),\lt)\ra(x,0) \:dx+\bg_0(dx) \Big)\\
		&+\int_0^T \!\!\int \varphi^{\prime}(t)\left(\left\la\bn,\frac{1}{2}|\lv|^2+\hat{e}(\Phi(\lf),\lt)\right\ra\:dx\:dt+\bg(dx\:dt)\right) =0.
		\end{align*}
	\end{small}
	
	The above analysis indicates the relevance of Definition \ref{def.mv.F} of \emph{dissipative measure valued solution} stated in 
	Section 2.

	\begin{remark}\rm
	Testing the energy equation~(\ref{poly.sys.mu.k})$_3$ against a test function $\varphi=\varphi(x,t)\in C^1_c(Q_T)$, 
	yields a different notion of measure-valued solution in the limit, the so-called \emph{entropy measure-valued} solution.
	In that case,  additional assumptions are required. First, one should represent the term 
	$$\frac{\partial\hat{\psi}}{\partial\xi^B}(\Phi(F^{\mu,k}),\theta^{\mu,k})
	\frac{\partial\Phi^B}{F_{i\alpha}}(F^{\mu,k}) v_i^{\mu,k}$$ in the flux, which requires growth
	conditions on $\Sigma_{i \alpha }(\Phi(F^{\mu,k}),\theta^{\mu,k})v_i^{\mu,k}$. Second, to treat the terms
	$\text{div}(\mu v_i^{\mu,k}\partial_{\alpha}v_i^{\mu,k})$ and  $\text{div}(k\partial_{\alpha}\theta^{\mu,k})$ the additional 
	uniform bounds
	$$
	|\mu(F,\theta) \theta|\leq C\mu_0 \, \quad \mbox{and}   \quad |k(F,\theta) \theta |\leq k_0|e(F ,\theta) | \, ,
	$$
	on the  diffusion coefficients are needed  to pass to the limit. 
	\end{remark}
	\begin{remark} \rm
	Let us return to the notion of \emph{dissipative measure-valued} solution but now with $r\neq0.$ To establish the limit as $\mu_0$, $k_0\to0^+$, we need to represent the terms $r$ and
	$\ds\frac{r}{\theta^{\mu,k}},$ as 
	they appear on the right-hand side of (\ref{poly.sys.mu.k})$_3$ and (\ref{poly.sys.mu.k})$_4$ respectively.
	Then \eqref{energynw.unif.bound} and~(\ref{entropy.unif.bound}) become
	\begin{align*}
	\int\left(\frac{1}{2}|v^{\mu,k}|^2+\hat{e}(\Phi(F^{\mu,k}),\theta^{\mu,k})\right)\:dx
	&\leq \int_0^T\int |{r}|\:dxdt
	+\int\left(\frac{1}{2}|v_0^{\mu,k}|^2+\hat{e}(\Phi(F_0^{\mu,k}),\theta_0^{\mu,k})\right)\:dx
	\end{align*}
	and
	\begin{align*}
	\int\hat{\eta}(\Phi(F^{\mu,k}),\theta^{\mu,k})\:dx-\int\hat{\eta}(\Phi(F^{\mu,k}),\theta^{\mu,k})(x,0)\:dx
	&=\int_0^T\!\!\!\!\int\left(k\frac{|\nabla\theta^{\mu,k}|^2}{(\theta^{\mu,k})^2}
	+\mu\frac{|\nabla v^{\mu,k}|^2}{\theta^{\mu,k}}\right)\:dxdt\\
	&+\int_0^T\int\frac{r}{\theta^{\mu,k}}\:dxdt
	\end{align*}
	respectively. In turn, to have the bounds~\eqref{norms.unif.bound} and~\eqref{entropy.diss}, the previous analysis to pass to the limit $(\mu_0,k_0)\to(0,0)$ 
	suggests to  require
	\begin{equation}\label{gr.con.1.r}
	r\in L^{\infty}(Q_T)\,\,\text{ and}\quad 0<\bar{\delta}\leq\theta^{\mu,k}
	\end{equation}
	for some $\bar{\delta}>0$, so that both $r$ and $\,\ds\frac{r}{\theta^{\mu,k}}\in L^1(Q_T)$  uniformly in $\mu$ and $k$.
	Hence, as $\mu_0$, $k_0\to0$, we get
	\begin{align*}
	&r\rightharpoonup r\left\la\bn,1\right\ra,\\
	\frac{r}{\theta^{\mu,k}}\rightharpoonup&\left\la\bn,\frac{r}{\lt}\right\ra=r\left\la\bn,\frac{1}{\lt}\right\ra,
	\end{align*}
	weak-$\ast$ in $L^{\infty}(\bar{Q}_T).$
	In summary, the viscosity limit produces a dissipative measure valued solution assuming~\eqref{mu.theta.assumptions} and the growth estimates~\eqref{gr.con.1.F}--\eqref{gr.con.4.F} for $r\equiv 0$ and in addition,~\eqref{gr.con.1.r} for $r\neq 0$.
	\end{remark}


	\section*{Acknowledgments}
	This project has received funding from the European Union's Horizon 2020 programme under the Marie Sklodowska-Curie
		grant agreement No 642768. Christoforou was partially supported by the Internal grant SBLawsMechGeom \#21036 from University of Cyprus.

\end{document}